\renewcommand{\le}{\leqslant}
\renewcommand{\ge}{\geqslant}
\newcommand{\eps}{\varepsilon}
\newcommand{\N}{{\mathbb N}}
\newcommand{\K}{{\mathbb K}}
\newcommand{\R}{{\mathbb R}}
\newcommand{\C}{{\mathbb C}}
\newcommand{\F}{\mathfrak{F}}
\newcommand{\FR}{\mathfrak{Fr}}
\newcommand{\U}{\mathfrak{U}}
\newcommand{\Oo}{\mathfrak{N}}
\newcommand{\Cauchy}{{\text{Cauchy}}}
\newcommand{\Compl}{{\text{Compl}}}
\newcommand{\supp}{{\text{supp}}}
\newcommand{\tvs}{topological vector space}
\newcommand{\TFAE}{the following assertions are equivalent: }
\theoremstyle{plain}
\newtheorem{theorem}{Theorem}[section]
\newtheorem{lemma}[theorem]{Lemma}
\newtheorem{corollary}[theorem]{Corollary}
\newtheorem{proposition}[theorem]{Proposition}
\theoremstyle{remark}
\newtheorem{remark}[theorem]{Remark}
\theoremstyle{definition}
\newtheorem{definition}[theorem]{Definition}
\newtheorem{problem}[theorem]{Problem}
\numberwithin{equation}{section}
\begin{document}
	\title[Completeness in topological vector spaces]{Completeness in topological vector spaces and filters on $\N$}
	\author{Vladimir Kadets}
	\address{School of Mathematics and Computer Sciences V.N. Karazin Kharkiv  National University,  61022 Kharkiv, Ukraine \newline
\href{http://orcid.org/0000-0002-5606-2679}{ORCID: \texttt{0000-0002-5606-2679}}}
	\email{v.kateds@karazin.ua}
	\author{Dmytro Seliutin}
	\address{School of Mathematics and Computer Sciences V.N. Karazin Kharkiv  National University,  61022 Kharkiv, Ukraine \newline
\href{https://orcid.org/0000-0002-4591-7272}{ORCID: \texttt{0000-0002-4591-7272}}}
	\email{selyutind1996@gmail.com}
	\thanks{ The research was partially supported by the National Research Foundation of Ukraine funded by Ukrainian state budget in frames of project 2020.02/0096 ``Operators in infinite-dimensional spaces:  the interplay between geometry, algebra and topology''}
	\subjclass[2000]{40A35; 54A20}
	\keywords{topological vector space, completeness, filter, ideal, $f$-statistical convergence}
	
\begin{abstract}
We study completeness of a topological vector space with respect to different filters on $\N$. In the metrizable case all these kinds of completeness are the same, but in non-metrizable case the situation changes. For example, a space may be complete with respect to one ultrafilter on $\N$, but incomplete with respect to another. Our study was motivated by [Aizpuru, List\'{a}n-Garc\'{i}a and Rambla-Barreno;  Quaest. Math., 2014] and  [List\'{a}n-Garc\'{i}a; Bull. Belg. Math. Soc. Simon Stevin, 2016] where for normed spaces the equivalence of the ordinary completeness and completeness with respect to $f$-statistical convergence was established.
\end{abstract}
	
\maketitle
	
\section{Introduction}

An increasing continuous function $f : [0, \infty) \to [0, \infty)$ is called a \emph{modulus function} if $f(0) = 0$ and $f(x+y) \le f(x) + f(y)$ for all $x, y \ge 0$. 
The $f$-density of a subset $A \subset \N$ is the quantity
$$
d_f (A) = \lim_{n \to \infty} \frac{f( |A \cap \overline{1, n}|)}{f(n)},
$$
where  $\overline{1, n}$  denotes the set of integers of the form $\{1,2, \ldots, n\}$ and the symbol $|D|$ means the number of elements in the set $D$. If for a set $A$ the above limit does not exist, then the $f$-density of $A$ is not defined.

Let $f$ be an unbounded modulus function, and $(x_n)$ be a sequence in a normed space $X$. An element $x \in X$ is called the \emph{$f$-statistical limit} of  $(x_n)$, if 
$$
d_f\left(\{n \in \N: \|x_n - x\| > \eps\}\right) = 0
$$ 
for every $\eps > 0$. 

According to \cite[Definition 3.2.]{aizpuru} $(x_n) \subset X$ is said to be \emph{$f$-statistically Cauchy} if for every $\eps > 0$ there exists $m \in \N$ such that 
$$
d_f\left(\{n \in \N: \|x_n - x_m\| > \eps\}\right) = 0.
$$
In the particular case of  the modulus function $f(t) = t$, the above definitions give the classical notions of \emph{statistical convergent} and \emph{statistical Cauchy} sequences, that are quite popular subjects of study.

Let us say that a normed space $X$ is \emph{$f$-complete}, if every $f$-statistically Cauchy sequence $(x_n) \subset X$ is $f$-statistically convergent.
Our research is motivated by \cite[Theorem 2.4]{listan-garcia} (see also \cite[Theorem 3.3]{aizpuru}): 
Let $X$ be a normed space. The following are equivalent:
(1) $X$ is complete; (2) $X$ is $f$-complete for every unbounded modulus $f$;
(3) there exists an unbounded modulus $f$ such that $X$ is $f$-complete.

Taking in account that convergence with respect to an unbounded modulus $f$ is equivalent to convergence with respect to the filter $\F_{f-st}$ of those subsets $B \subset \N$ that $d_f (\N \setminus B) = 0$, the above theorem leads to the natural question whether the same result is true in more general setting of filter convergence. We show that the answer is positive, and moreover it easily generalizes to metrizable  topological vector spaces. On the other hand, an attempt to generalize it further to arbitrary Hausdorff topological vector spaces fails because sequential completeness does not imply completeness in general. This motivates some results and leads to many open questions which we discuss at the end of our article.

Below, we use the term ``topological vector space'' (abbreviation TVS) for a Hausdorff topological vector space over the field $\K$ which is either the field $\R$ of reals, or the field $\C$ of complex numbers. We follow notation from \cite{kadets}, in particular for a topological space $X$, $\mathfrak{N}_z$ or $\mathfrak{N}_z(X)$ denotes the family of neighborhoods of point $z\in X$. If $X$ is a TVS, $\mathfrak{N}_0$ or $\mathfrak{N}_0(X)$ is the family of neighborhoods of zero,  $X^*$ is the set of all continuous linear functional on $X$ and  $X'$ is the set of ALL linear functional on $X$. For two subsets $A$, $B$ of a linear space the symbol $A+B$ denotes the corresponding Minkowskii sum: $A+B = \{a+b : a \in A, b \in B \}$. We refer to  \cite[Section 16.1]{kadets} for a short introduction to filters and ultrafilters, and to \cite{Bourbaki} for a detailed one. The very basic facts about  topological vector spaces can be found in \cite[Chapters 16 and 17]{kadets}, and for a much deeper exposition we refer to the classical book \cite{koethe}.

The structure of the paper is as follows. In the next section we recall, for  the reader's convenience, the definitions and basic facts about filters and filter convergence in topological spaces. After that, in the section ``Completeness, sequential completeness, and completeness over a filter on $\N$'', we recall the basic facts about Cauchy filters and completeness  in TVS, introduce formally the completeness over a filter on $\N$, list some features of this new property,  and deduce,  for general filters on $\N$ and a metrizable  TVS, the validity of equivalences like those in  \cite[Theorem 2.4]{listan-garcia}. After that we pass to the general non-metrizable case (Section ``Various types of completeness and classes of filters and spaces''). We discuss the relationship between completeness, sequential completeness, and completeness with respect to various filters on $\N$ (subsection ``Countable completeness''),  demonstrate a non-metrisable version  of  \cite[Theorem 2.4]{listan-garcia} in locally convex spaces under the additional boundedness condition for the Cauchy sequences in question (subsection ``Completeness and boundedness''), give an example of a sequentially complete space which is not complete with respect to ANY free ultrafilter on $\N$, and  give an example of a space which is complete with respect to a free ultrafilter on $\N$ but is not complete with respect to some other ultrafilter (subsection ``Completeness and ultrafilters''). We conclude the paper with some open questions.

\section{Basic facts about filters and filter convergence}
	
Let $\Omega$ be a non-empty set. Recall that \textit{filter} on $\Omega$ is a non-empty family $\F$ of subsets in $\Omega$, satisfying the following axioms: $\Omega \in \F$; $\emptyset \not\in \F$; if $A,\ B \in \F$ then $A \cap B$; and if $A \in \F$ and $D \supset A$ then $D \in \F$.

Every point  $x_0 \in \Omega$ generates the \emph{trivial filter} of all subsets containing  $x_0$. The \textit{Fr\'{e}chet filter}  $\FR = \{A \subset \N: |\N \setminus A| < \infty\}$ is the simplest example of non-trivial filter on $\N$.

A subset $A \subset \Omega$ is called  \emph{$\F$-stationary} if $A$ intersects all elements of $\F$.
	
A non-empty family  $G \subset 2^{\Omega}$ is called a \textit{filter base}, if $\emptyset \notin G$ and for every pair $A, B \in G$ there exists $C \in G$ such that $C \subset A \cap B$. The \emph{filter generated by the base} $G$ is the collection of all those $A \subset \Omega$ for which there is a $B \in G$ such that $A \supset B$.  A filter $\F$ is generated by a base  $G$ if  $G \subset \F$ and each element of $\F$ contains at least one element from $G$. When we write $\F=\F(G)$ it means that $G$ is a base for the filter $\F$. In this notation, the trivial filter on $\Omega$ generated by $x_0 \in \Omega$ is equal to $\F(\{\{x_0\}\})$, and   $\FR = \F(\{\N \setminus \overline{1, n}\}_{n \in \N})$.

The set of all filters on $\Omega$ is naturally ordered by inclusion. Maximal in this ordering filters are called \emph{ultrafilters}. The only constructive examples of ultrafilters are the trivial ones, but the Zorn's lemma implies the existence of many non-trivial ultrafilters: for every filter $\F$ on $\Omega$ there is an ultrafilter $\U$ such that $\F \subset \U$. In particular, on $\N$ there are \emph{free} ultrafilters, i.e. ultrafilters that dominate the Fr\'{e}chet filter.  Below, if the contrary is not precised, on $\N$ we consider only free filters and ultrafilters. 

Let $\Omega$ be a set with a filter $\F_0$, $Y$ be another set, and $f : \Omega \to Y$ be a function. The natural collection $f(\F_0) = \{f(A) \colon A \in \F_0\}$ in $Y$ is not necessarily a filter, but is a filter base. By this reason the  \emph{image of the filter} $\F_0$ is defined as $f[\F_0] := \F(f(\F_0))$. 

A sequence $x = (x_n) \subset Y$ is a function that acts from $\N$ to $Y$. By this reason at our convenience we use notation $x(n)$ for $x_n$,  $x(A)$ for the set $\{x_n : n \in A\}$, etc.

Let $Y$ be a topological space, $\F$ be a filter on $Y$. A point $z \in Y$ is called a \textit{limit} of the filter $\F$ ($z = \lim \F$), if $\Oo_z \subset \F$, and is called a  \textit{cluster point} of $\F$ if every neighborhood of $z$ is $\F$-stationary. In a Hausdorff space the limit of $\F$, if exists, is the unique cluster point of $\F$.

Let $\F_1 \subset \F_2$ be filters on $Y$. Then every cluster point of $\F_2$ is a cluster point of $\F_1$, and the limit of $\F_1$, if exists, is the limit of $\F_2$.

A sequence $x = (x_n) \subset Y$ is called \textit{converging to an element $y \in Y$ over filter $\F$ on $\N$} ($y = \lim_{\F} x_n$), if  $y = \lim x[\F]$, that is for every $V \in \Oo_y$ there exists such $A \in \F$ that $x(A) \subset V$. $y$ is a \emph{cluster point of $x$ over $\F$} if $y$ is a cluster point of $x(\F)$. The huge advantage of ultrafilters, that we use in some instances below, is that for every ultrafilter $\U$ on $\N$ every sequence with values in a compact (in particular every bounded numerical sequence) possesses a limit over $\U$. 

\section{Completeness, sequential completeness, and completeness over a filter on $\N$}
	
Let $X$ be a TVS. A filter $\F$ on $X$ is called \textit{Cauchy filter}, if for every $U \in \Oo_0$ there exists $A \in \F$ such that $A - A \subset U$ (write $\F \in \Cauchy$).
Evidently, if $\F_1 \subset \F_2$ are filters on $X$ and  $\F_1 \in \Cauchy$, then  $\F_2 \in \Cauchy$.
	
A \tvs\ $X$ is called \textit{complete} ($X \in \Compl$), if every Cauchy filter on $X$ has a limit. Remark, that the most important examples of normed spaces are complete. This is the reason why  in frames of normed spaces the majority of researchers are concerned only about complete (i.e. Banach) spaces. For the topological vector spaces the situation is very different: many spaces that motivated the whole theory, like infinite-dimensional dual Banach spaces equipped with the weak$^*$ topology, are incomplete ($X^*$ is not closed in $X{'}$ in the pointwise convergence topology), so one cannot avoid them in frames of the general theory.

\begin{proposition}[{\cite[Section 16.2.2, Theorem 2]{kadets}}] \label{rem-clust-lim}
Let $X$ be a TVS,  $\F$ be a Cauchy filter $\F$ on $X$ and $z \in X$ be a cluster point of $\F$, then $z = \lim \F$. 
\end{proposition}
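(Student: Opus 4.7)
The plan is to unwind the definitions of cluster point and Cauchy filter and exploit continuity of addition at $0$. Fix an arbitrary $V \in \Oo_z$; it suffices to exhibit some $A \in \F$ with $A \subset V$, for then $V$ itself lies in $\F$ (filters are upward closed) and hence $\Oo_z \subset \F$, which is exactly the statement $z = \lim \F$.

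First I would translate the problem to a zero-neighborhood problem: $V - z \in \Oo_0$. By continuity of addition $(u,v) \mapsto u+v$ at $(0,0)$, I can pick $U \in \Oo_0$ with $U + U \subset V - z$, equivalently $z + U + U \subset V$. Next, I would invoke the Cauchy condition to choose $A \in \F$ with $A - A \subset U$. Finally, I would use the cluster point hypothesis: the neighborhood $z + U \in \Oo_z$ must be $\F$-stationary, so $(z+U) \cap A \ne \emptyset$; pick any $a$ in this intersection.

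The conclusion follows from a one-line chase: for every $b \in A$,
\[
b - z = (b - a) + (a - z) \in (A - A) + U \subset U + U \subset V - z,
\]
so $b \in V$; hence $A \subset V$ as required.

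There is essentially no obstacle here beyond being careful with the order of quantifiers and using the right sum-of-two-neighborhoods trick; no symmetry of $U$ is needed because the Cauchy condition already supplies the symmetric-looking set $A - A$. The argument uses only the TVS axioms (continuity of addition), the filter axioms (upward closure), and the two hypotheses verbatim.
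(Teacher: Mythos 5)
Your argument is correct and is the standard proof of this fact; the paper itself gives no proof, citing it from \cite[Section 16.2.2, Theorem 2]{kadets}, and your chain $b - z = (b-a) + (a-z) \in (A-A) + U \subset U + U \subset V - z$ is exactly the textbook route. Nothing to add.
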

	
A sequence $x = (x_n) \in X^\N$ is said to be a \textit{Cauchy sequence over  filter $\F$ on $\N$}, if $x[\F]$ is a Cauchy filter on $X$. We denote the last property by $x \in \Cauchy(\F)$.	 In other words,   $x \in \Cauchy(\F)$, if for every $U \in \Oo_0$ there exists $B \in \F$ such that $x(B) - x(B) \subset U$.

$x = (x_n) \in X^\N$ is said to be a \textit{Cauchy sequence} if  $x \in \Cauchy(\FR)$. In other words,  $x \in \Cauchy(\FR)$   if for every $U \in \Oo_0$ there exists $N \in \N$ such that $x_n - x_m \in U$ for all $n, m \ge N$.
	
It seems to us that the following definition, which is the main object of study in this article, is new. At least, we did not find it in the literature.
	
\begin{definition} \label{def-F-complete}
Let $\F$ be a free filter on $\N$. A \tvs\ $X$ is said to be \textit{complete over $\F$}, if every Cauchy sequence over $\F$ in $X$ has a limit over $\F$. We denote this property by $X \in \Compl(\F)$.
\end{definition}

Recall, that a \tvs \ $X$ is called \textit{sequentially complete}, if $X \in \Compl(\FR)$, that is, if every Cauchy sequence in $X$ has a limit.

Let us list some elementary general facts about completeness over filters.

\begin{theorem} \label{thm-ComplF1F2}
$ \ $

\begin{enumerate}[\emph{(}1\emph{)}]
\item If $X \in \Compl$, then $X \in \Compl(\F)$ for every free filter $\F$ on $\N$. In particular,

\item  a complete TVS is sequentially complete.

\item In order to verify that $X \in \Compl(\F)$ it is sufficient to check that every Cauchy sequence over $\F$ in $X$ has a cluster point over $\F$.

\item  If $\F_1 \subset \F_2$ are filters on $\N$ and $X \in \Compl(\F_2)$, then  $X \in \Compl(\F_1)$

\item  If $\F$ is a filter on $\N$, $f: \N \to \N$ is a function, and  $X \in \Compl(\F)$, then $X \in \Compl(f[\F])$.

\item  If $\F$ is a filter on $\N$, $f: \N \to \N$ is an injective function, and   $X \in \Compl(f[\F])$, then $X \in \Compl(\F)$.

\end{enumerate}
\end{theorem}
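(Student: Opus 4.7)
All six items follow from one core idea: encode ``Cauchy/convergent over $\F$'' for a sequence $x$ as the corresponding property of the image filter $x[\F]$, and then exploit the functoriality of image filters together with Proposition~\ref{rem-clust-lim}. No substantial new tool is needed; the arguments are bookkeeping about how $x[\F]$ changes when $\F$ or $x$ is replaced.

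Items (1)--(3) are almost immediate. For (1), if $x\in\Cauchy(\F)$ then $x[\F]$ is by definition a Cauchy filter on $X$, so completeness of $X$ produces a limit $y=\lim x[\F]$, which is precisely $\lim_\F x_n$. Item (2) is (1) applied to $\F=\FR$. For (3), a cluster point of $x$ over $\F$ is by definition a cluster point of the Cauchy filter $x[\F]$, hence its limit by Proposition~\ref{rem-clust-lim}.

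For (4), I first observe that $\F_1\subset\F_2$ entails $x[\F_1]\subset x[\F_2]$, because any basic set $x(A)$ with $A\in\F_1$ is already basic for $x[\F_2]$. Hence if $x[\F_1]$ is Cauchy, so is the finer filter $x[\F_2]$ (as noted before Proposition~\ref{rem-clust-lim}). By hypothesis $x[\F_2]$ has a limit $y$; but cluster points pass from the larger filter down to the smaller one, so $y$ is a cluster point of $x[\F_1]$, and (3) then yields $y=\lim_{\F_1} x_n$.

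Items (5) and (6) both hinge on the identity $y[f[\F]]=(y\circ f)[\F]$, which is a direct unraveling of the definitions: the base of either side consists of the sets $y(f(A))$ for $A\in\F$ together with their supersets. For (5), if $y\in\Cauchy(f[\F])$, then $(y\circ f)[\F]=y[f[\F]]$ is Cauchy, so the hypothesis applied to $y\circ f\in\Cauchy(\F)$ produces a point $p=\lim_\F (y\circ f) = \lim(y\circ f)[\F]=\lim y[f[\F]]$, which is exactly $\lim_{f[\F]} y_n$. For (6), given $x\in\Cauchy(\F)$ and an injective $f:\N\to\N$, I define $y:\N\to X$ by $y(f(m))=x(m)$ (well defined by injectivity) and $y(n)=0$ for $n\notin f(\N)$; injectivity then gives $y(f(A))=x(A)$ for every $A\subset\N$, whence $y[f[\F]]=x[\F]$. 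Consequently $y\in\Cauchy(f[\F])$, the hypothesis yields a limit $p$ of $y$ over $f[\F]$, and the same equality of filters shows that $p$ is a limit of $x$ over $\F$. The only step requiring more than filter manipulation is the construction in (6), where the injectivity of $f$ is precisely what prevents $y(f(A))$ from picking up spurious values; this is the mild main obstacle of the theorem.
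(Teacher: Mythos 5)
Your proposal is correct and follows essentially the same route as the paper: items (1)--(4) via the Cauchy-filter/cluster-point reduction and Proposition~\ref{rem-clust-lim}, item (5) via the identity $(y\circ f)[\F]=y[f[\F]]$, and item (6) via a sequence $y$ with $y\circ f=x$ (your explicit definition of $y$ is just the paper's $x\circ g$ for a left inverse $g$ of $f$, with the irrelevant values off the range of $f$ set to $0$). No gaps.
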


\begin{proof} (1) follows from the definition, (2) is a particular case of (1) for $\F = \FR$. 

Let us check (3).  Let $\F$ be a free filter on $\N$ and $x = (x_n) \in X^\N$  be a Cauchy sequence over $\F$. Assume that we know that $x$  has a cluster point over $\F$. This means that  a Cauchy filter $x[\F]$ has has a cluster point, so the application of Proposition \ref{rem-clust-lim} gives us the existence of $\lim x[\F]$.

Now it is turn of the statement (4). If  $x = (x_n) \in X^\N$  is a Cauchy sequence over $\F_1$, then $x \in \Cauchy(\F_2)$. Consequently, by $\F_2$-completeness of $X$ there is $y \in X$ such that $y = \lim x[\F_2]$. Taking in account that  $ x[\F_2] \supset  x[\F_1]$, this $y$ is a cluster point for $ x[\F_1]$. It remains to apply the statement (3).

Let us demonstrate (5). Let  $x = (x_n) \in X^\N$  be a Cauchy sequence over $f[\F]$. Consider the sequence $y = x \circ f$, i.e. $y = (y_n)$, where $y_n = x_{f(n)}$. Then $y[\F] = x[f[\F]]$, so $y$ is a Cauchy sequence over $\F$. By the $\F$-completeness assumption, there exist $\lim_\F y$ which  is the limit of $x$ over $f[\F]$.

Finally, let us demonstrate (6). Denote $g: \N \to \N$ a left inverse to $f$, which means that $g$ satisfies the condition $g(f(n)) = n$ for all $n \in \N$.  Let  $x = (x_n) \in X^\N$  be a Cauchy sequence over $\F$. Consider the sequence $y = x \circ g$. Then $y \circ f = x \circ g \circ f = x$. So, $y \circ f \in \Cauchy(\F)$ which means that  $y [f[\F]]$ is a Cauchy filter, so $y \in \Cauchy(f[\F])$. By the $f[\F]$-completeness assumption, there exist $\lim_{f[\F]} y$. By the definition, this means that $y [f[\F]] = x[g[f[\F]]] = x[\F]$ is a convergent filter.
\end{proof}	

Remark, that item (5)  of the previous statement is of interest for us only if $f[\F] \supset \FR$, which is not always the case. Also, in (6)  the assumption of injectivity cannot be omitted because, without it, it may happen that  $f[\F]$ is a trivial filter, in which case the   $f[\F]$-completeness is true for every space but does not give any information about the $\F$-completeness. 

Now we are ready to the promised extension of \cite[Theorem 3.3]{aizpuru} to general filters. 

\begin{theorem} \label{thm-metriz-compl}
Let $X$ be a TVS possessing a countable base of neighborhoods of zero (in other words, $X$ is metrizable), then \TFAE
\begin{enumerate}[\emph{(}i\emph{)}]

\item $X$ is complete. 

\item $X \in \Compl(\F)$ for every free filter $\F$ on $\N$. 

\item There is a every free filter $\F$ on $\N$ such that  $X \in \Compl(\F)$

\item $X$ is sequentially complete.

\end{enumerate}
\end{theorem}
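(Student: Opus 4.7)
The plan is to close the cycle $(i) \Rightarrow (ii) \Rightarrow (iii) \Rightarrow (iv) \Rightarrow (i)$, where only the last implication requires real work; the first three are essentially bookkeeping based on Theorem \ref{thm-ComplF1F2}. Specifically, $(i) \Rightarrow (ii)$ is exactly item (1) of Theorem \ref{thm-ComplF1F2}. For $(ii) \Rightarrow (iii)$, I would exhibit a concrete free filter, namely the Fr\'echet filter $\FR$. For $(iii) \Rightarrow (iv)$, I would use item (4) of Theorem \ref{thm-ComplF1F2}: every free filter $\F$ on $\N$ satisfies $\FR \subset \F$ by definition of ``free'', so $\Compl(\F)$ descends to $\Compl(\FR)$, which is sequential completeness by definition.

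The serious content lies in $(iv) \Rightarrow (i)$, which is the classical fact that a metrizable TVS is complete as soon as it is sequentially complete. My approach is to fix a countable base $\{U_n\}_{n \in \N}$ of balanced neighborhoods of zero with $U_{n+1} + U_{n+1} \subset U_n$ (standard arrangement), and take an arbitrary Cauchy filter $\F$ on $X$. For each $n$ pick $B_n \in \F$ with $B_n - B_n \subset U_n$, and set $A_n := B_1 \cap \dots \cap B_n$ so that $(A_n)$ is a decreasing sequence in $\F$ still satisfying $A_n - A_n \subset U_n$. Choose $x_n \in A_n$ arbitrarily. Then for $m \ge n$ we have $x_m, x_n \in A_n$, so $x_m - x_n \in U_n$, which shows that $(x_n)$ is a Cauchy sequence in the usual sense. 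By the assumed sequential completeness, $(x_n)$ admits a limit $x \in X$.

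It remains to prove that $x = \lim \F$. Given an arbitrary $V \in \Oo_0$, I pick $n$ large enough that $U_n + U_n \subset V$ and $x_k - x \in U_n$ for all $k \ge n$. Then for any $a \in A_n$ we have $a - x = (a - x_n) + (x_n - x) \in U_n + U_n \subset V$, so $A_n \subset x + V$, whence $x + V \in \F$ since $A_n \in \F$ and $\F$ is upward closed. This proves $\Oo_x \subset \F$, i.e.\ $x = \lim \F$, and $X$ is complete. The step that required the metrizability hypothesis is the diagonal construction of the sequence $(x_n)$ tracking the countable base, and this is exactly where the generalization to arbitrary (non-metrizable) TVS is expected to break down, as foreshadowed in the introduction.
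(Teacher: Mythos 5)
Your decomposition into the cycle (i)$\Rightarrow$(ii)$\Rightarrow$(iii)$\Rightarrow$(iv)$\Rightarrow$(i) and the construction in the last implication coincide with the paper's proof: the same nested base with $U_{n+1}+U_{n+1}\subset U_n$, the same choice of $x_n$ in the intersection of the first $n$ selected filter elements, the same appeal to sequential completeness, and the same final verification that the sequential limit is the limit of the Cauchy filter.

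There is, however, one imprecise step in that final verification. You ``pick $n$ large enough that $U_n + U_n \subset V$ and $x_k - x \in U_n$ for all $k \ge n$.'' The second requirement is a diagonal condition that does not follow from $x_k \to x$ alone: convergence gives, for each fixed $m$, a threshold $N(m)$ beyond which $x_k - x \in U_m$, but $N(m)$ may grow faster than $m$ (compare $x_k - x$ of size $1/k$ in $\R$ against $U_m$ of radius $2^{-m}$), so there need be no $n$ with $N(n)\le n$, and the set of $n$ satisfying your condition can be empty. For the particular sequence you constructed the condition is almost salvageable, since $x_m - x_n \in A_n - A_n \subset U_n$ for all $m \ge n$ yields $x - x_n \in \overline{U_n} \subset U_n + U_n \subset U_{n-1}$; but you neither state nor use this, and even then the containment lands in $U_{n-1}$ rather than $U_n$. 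The clean fix is to decouple the two indices, exactly as the paper does: fix $n$ with $U_n + U_n \subset V$, then choose any single $k \ge n$ with $x_k - x \in U_n$ (this uses convergence only at the fixed level $n$), and estimate, for $a \in A_k$, that $a - x = (a - x_k) + (x_k - x) \in U_k + U_n \subset U_n + U_n \subset V$, so that $A_k \subset x + V$ and hence $x + V \in \F$. With this one-line repair your argument is complete and agrees with the paper's.
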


\begin{proof}
The implication (i)$\Rightarrow$(ii) is covered by item (1) of Theorem \ref{thm-ComplF1F2}, the implication (ii)$\Rightarrow$(iii) is evident, and the implication (i)$\Rightarrow$(ii) follows from the item (4) of Theorem \ref{thm-ComplF1F2}. It remains to demonstrate that (iv)$\Rightarrow$(i). This fact is well-known (\cite[Section 16.2.2, Exercise 4]{kadets}) and may be deduced from an analogous theorem for uniform spaces. Nevertheless, for the reader's convenience (and for a reference below) we prefer to give a direct proof. So, let $U_n \in \Oo_0$, $U_1 \supset U_2 \supset \ldots$ be a base of  neighborhoods of  zero with the property that $U_{n+1} + U_{n+1} \subset U_n$, and let  $\F$ be a Cauchy filter on $X$. For each $n \in \N$ pick $A_n \in \F$ such that $A_n - A_n \subset U_n$, and select an $x_n \in \bigcap_{k=1}^n A_k$. Then  $x = (x_n)$ is a Cauchy sequence. Indeed, for every $U \in \Oo_0$ there is an $N \in \N$ such that $U_N \subset U$. Then, for $n, m \ge N$ we have that $x_n - x_m \in A_N - A_N \subset U_N \subset U$.

Since $x = (x_n)$ is a Cauchy sequence and $X$ is sequentially complete, there is $y:= \lim_{n \to \infty} x_n$. Let us show that the same $y$ is the limit of $\F$. Consider an arbitrary neighborhood $V \in \Oo_y$. $V - y \in \Oo_0$, so there is $m \in \N$ such that $U_{m} \subset V - y$. By the definition of $y$, there is $k > m$ such that $x_k \in U_{m+1} + y$. For this $k$ we have $A_k - x_k \subset A_k - A_k \subset U_k \subset U_{m+1}$, consequently $A_k \subset U_{m+1} + x_k$. This means that 
$$
V \supset U_m + y \supset U_{m+1} + U_{m+1} + y  \supset U_{m+1} + x_k \supset A_k,
$$
so $V \in \F$.
\end{proof}

\section{Various types of completeness and classes of filters and spaces}

Although in metrizable spaces all types of completeness that we mentioned above are the same, in general non-metrizable spaces the picture is much more complex. It is well-known that an incomplete  topological vector space may be sequentialy complete. The most important example of such kind is the Hilbert space $\ell_2$ equipped with the weak topology. This section is devoted to the non-metrizable case, where many interesting examples come from the duality theory for locally convex spaces. A very good comprehensive introduction to duality is \cite{R-R}, a shorter one may be found in \cite[Chapters 17, 18]{kadets}.  As usual, for a duality pair $X, Y$ we denote $\sigma(X, Y)$  the weak topology on $X$ generated by $Y$.

\subsection{Countable completeness}

\begin{definition}
A \tvs \ $X$ is said to be \textit{countable complete}, if $X \in \Compl(\F)$ for all $\F$ on $\N$.
\end{definition} 

We are going to show that countable completeness implies completeness for separable spaces, but does not imply completeness in general, and that sequential completeness does not imply countable completeness. At first, an easy reformulation.

\begin{lemma}\label{lem-count-compl-1}
For a  \tvs \ $X$ \TFAE
\begin{enumerate}[\emph{(}i\emph{)}]

\item $X$ is countable complete. 

\item For every Cauchy filter $\F$ on $X$, if $\F$ has a countable element then $\F$ has a limit. 
\end{enumerate}
\end{lemma}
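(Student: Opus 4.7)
The plan is to handle the two implications separately; (ii)$\Rightarrow$(i) should be essentially definitional, while the substance lies in (i)$\Rightarrow$(ii).

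For (ii)$\Rightarrow$(i), I would take a free filter $\F$ on $\N$ and a Cauchy sequence $x=(x_n)\in\Cauchy(\F)$. By definition $x[\F]$ is a Cauchy filter on $X$, and the set $x(\N)=\{x_n:n\in\N\}$, being a countable element of $x[\F]$, lets hypothesis (ii) produce $y=\lim x[\F]=\lim_\F x_n$.

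For the converse (i)$\Rightarrow$(ii), given a Cauchy filter $\G$ with countable $A\in\G$, my first move is to dispense with the subcase $\bigcap\G\ne\emptyset$: any $z\in\bigcap\G$ must then be $\lim\G$, because for $V\in\Oo_z$ and $U\in\Oo_0$ with $z+U\subset V$, a set $B\in\G$ with $B-B\subset U$ contains $z$, so every $b\in B$ satisfies $b-z\in B-B\subset U$, forcing $B\subset z+U\subset V$ and thus $V\in\G$. In the remaining subcase $\bigcap\G=\emptyset$, the element $A$ must be infinite, since otherwise the trace filter $\{A\cap B:B\in\G\}$ on the finite set $A$ would be principal with generator $A\cap\bigcap\G\ne\emptyset$. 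I would then enumerate $A=\{a_n:n\in\N\}$ bijectively, set $x_n:=a_n$, and define $\F$ to be the filter on $\N$ generated by the base $\{a^{-1}(B):B\in\G\}$ (which is indeed a base, since $A\cap B\ne\emptyset$ for every $B\in\G$).

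This $\F$ is free: for every $n_0\in\N$, since $a_{n_0}\notin\bigcap\G$ there is $B\in\G$ with $a_{n_0}\notin B$, so $\N\setminus\{n_0\}\supset a^{-1}(B)\in\F$. The sequence $(x_n)$ is Cauchy over $\F$: if $U\in\Oo_0$ and $B\in\G$ satisfy $B-B\subset U$, then $C=a^{-1}(B)\in\F$ gives $x(C)=A\cap B\subset B$, so $x(C)-x(C)\subset U$. Invoking countable completeness produces $y=\lim_\F x_n$. Finally, $x[\F]\subset\G$: every $C\in\F$ contains some $a^{-1}(B)$ with $B\in\G$, so $x(C)\supset A\cap B\in\G$, whence $x(C)\in\G$; combined with $\Oo_y\subset x[\F]$, this gives $y=\lim\G$.

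The main subtlety I anticipate is precisely the split above: without first handling the $\bigcap\G\ne\emptyset$ case, the auxiliary filter $\F$ on $\N$ need not be free, and the countable-completeness hypothesis then could not be applied. Everything else reduces to routine bookkeeping with Cauchy filters and the general fact that a limit of a coarser filter remains a limit of every finer one.
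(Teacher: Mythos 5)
Your proof is correct and follows essentially the same route as the paper's: transfer the Cauchy filter to a free filter on $\N$ via a bijection with the countable element, apply countable completeness, and pull the limit back. Your preliminary case split on $\bigcap\G\neq\emptyset$ is a more careful treatment of the degenerate situation than the paper's bare assumption that $\F$ is ``non-trivial'' --- it is exactly what is needed to guarantee that the transferred filter is free and that $A$ is infinite, so this extra care is welcome rather than superfluous.
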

	
\begin{proof}
(ii)$\Rightarrow$(i). Let $\F$ be a filter on $\N$, and $x = (x_n) \in 2^X$ be an $\F$- Cauchy sequence in $X$. Then $x[\F]$  is Cauchy filter on $X$,  $x[\F]$ has a countable element  $x(\N)$, so $x[\F]$ has a limit, which, according to the definition means that $x$ has limit with respect to $\F$.

(i)$\Rightarrow$(ii)   Let $\F$ be a non-trivial Cauchy filter on $X$,  $A \in \F$ be a countable element. Let $x: \N \to A$ be a bijection. Define $x^{-1}(\F) = \{D \subset \N : x(D) \in \F\}$. Then $x[x^{-1}(\F)] = \F$, so $x \in \Cauchy(x^{-1}(\F))$  which means the existence of $\lim_{x^{-1}(\F)}x$ which, by the definition, is the limit of $x[x^{-1}(\F)] = \F$ in $X$.
\end{proof}

\begin{definition}
A \tvs \ $X$ is said to be \textit{ asymptotically countable}, if for every Cauchy filter $\F$ on $X$ there is a countable set $A \subset X$ such that $A \cap (B + V) \neq \emptyset$ for every $V \in \Oo_0$ and $B \in \F$.
\end{definition} 	

Evidently, a separable space is asymptotically countable. Remark that there are non-separable asymptotically countable spaces. A funny example comes from the fact that every complete space is asymptotically countable (just in the notation from the above definition take such $A$ that $\lim {\F} \in A$). Less evident examples come from  asymptotic countability of every TVS that has a countable base of zero neighborhoods: this can be done similarly to the implication (iv)$\Rightarrow$(i) of Theorem \ref{thm-metriz-compl}.

\begin{theorem}\label{thm-count-comp-sep}
For an asymptotically countable \tvs \ $X$ (in particular, for separable $X$) its completeness is equivalent to its countable  completeness.
\end{theorem}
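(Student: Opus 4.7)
The implication that completeness implies countable completeness is immediate from item (1) of Theorem~\ref{thm-ComplF1F2}, so I would only need to establish the reverse direction. Assume $X$ is asymptotically countable and countably complete, and let $\F$ be an arbitrary Cauchy filter on $X$. By asymptotic countability, fix a countable set $A \subset X$ with $A \cap (B + V) \neq \emptyset$ for every $B \in \F$ and every $V \in \Oo_0$. The strategy is to build an auxiliary Cauchy filter on $X$ whose elements have countable intersection with $A$, apply Lemma~\ref{lem-count-compl-1}, and then transfer the limit back to $\F$.

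Concretely, I would consider the family $\G_0 = \{A \cap (B + V) : B \in \F, \, V \in \Oo_0\}$. Asymptotic countability guarantees these sets are nonempty, and since $(B_1 \cap B_2) + (V_1 \cap V_2) \subset (B_1 + V_1) \cap (B_2 + V_2)$, the family $\G_0$ is closed under the ``contained in the intersection'' condition and so is a filter base. Let $\G$ be the filter generated by $\G_0$. Clearly $A \in \G$ (a countable element), and I would check that $\G$ is Cauchy as follows: given $U \in \Oo_0$, pick a balanced $V \in \Oo_0$ with $V + V + V \subset U$ and $B \in \F$ with $B - B \subset V$; then for $G = A \cap (B + V) \in \G$ one has $G - G \subset (B-B) + (V-V) \subset V + V + V \subset U$. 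By countable completeness and Lemma~\ref{lem-count-compl-1}, $\G$ has a limit $y \in X$.

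The main step (and I expect the most delicate point) is to show that this same $y$ is a cluster point of $\F$; once that is done, Proposition~\ref{rem-clust-lim} finishes the proof since $\F$ is Cauchy. So fix $B \in \F$ and a neighborhood $V \in \Oo_0$ of zero. Choose a balanced $W \in \Oo_0$ with $W + W \subset V$. Since $y = \lim \G$, there exists $G \in \G$ with $G \subset y + W$, and by construction $G$ contains some $A \cap (B' + W')$ with $B' \in \F$ and $W' \in \Oo_0$; replacing $B'$ by $B' \cap B$ and $W'$ by $W' \cap W$ we may assume $B' \subset B$ and $W' \subset W$. Picking any $a \in A \cap (B' + W')$ and writing $a = b' + w'$ with $b' \in B'$, $w' \in W'$, we get
$$
b' = a - w' \in (y + W) - W' \subset y + W + W \subset y + V,
$$
so $b' \in B \cap (y + V)$, which shows $y$ is a cluster point of $\F$. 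Proposition~\ref{rem-clust-lim} then yields $y = \lim \F$, proving that $X$ is complete.
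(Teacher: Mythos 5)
Your proposal is correct and follows essentially the same route as the paper's proof: both construct the auxiliary filter generated by the base $\{A \cap (B+V) : B \in \F,\ V \in \Oo_0\}$, verify it is Cauchy with the countable element $A$, obtain its limit $y$ from countable completeness via Lemma~\ref{lem-count-compl-1}, and then show $y$ is a cluster point of $\F$ by decomposing a point $a = b' + w'$ of a base element lying in a small neighborhood of $y$, finishing with Proposition~\ref{rem-clust-lim}. The only difference is cosmetic: your cluster-point argument is phrased via shrinking $B'$ and $W'$ inside the given $B$ and $V$, whereas the paper intersects with $D \in \F$ and $V$ directly inside a chain of inclusions.
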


\begin{proof} If $X$ is complete, then it is complete with respect to all filters on $\N$ by the evident item (1) of Theorem \ref{thm-ComplF1F2}, so we only need to check the inverse implication.

Let $X$ be  asymptotically countable and countable complete. Consider  a non-trivial Cauchy filter $\F$ on $X$. Fix  a corresponding countable set $A \subset X$ such that the collection $G \subset 2^X$ consisting of all sets of the form $A \cap (V + B)$, where $V \in \Oo_0$  and $B \in \F$, does not contain the empty set. Evidently, $G$ is a filter base. Denote $\widetilde \F$ the filter generated by the base $G$. Since $A \in G \subset \widetilde \F$, $\widetilde \F$ has a countable element. Also, $\widetilde \F \in \Cauchy$.  Indeed, let  $U \in \Oo_0$. Select a balanced neighborhood  $V \in \Oo_0$ such that $V + V + V \subset U$. We know that $\F \in \Cauchy$, consequently there exists $B \in \F$ with $B - B \subset V$. Then $A \cap (V + B) \in \widetilde \F$ and
$$
(A \cap (V + B)) - (A \cap (V + B)) \subset  (V + B) -  (V + B) 
$$
$$
\subset  (V - V) +  (B - B) \subset V - V + V = V + V + V  \subset U,
$$
which completes the proof of the fact that $\widetilde \F \in \Cauchy$. Then, by the countable completeness, there is $y \in X$ such that $y = \lim \widetilde \F$ (we use (ii) of Lemma \ref{lem-count-compl-1}). It remains to show that $y = \lim \F$. In order to demonstrate this, it is sufficient to show that $y$ is a cluster point for $\F$ (Proposition \ref{rem-clust-lim}). For this, let us consider an arbitrary neighborhood $U \in \Oo_y$, arbitrary $D \in \F$ and demonstrate that $U \cap D \neq \emptyset$. Select a balanced neighborhood  $V \in \Oo_0$ such that $V + V  \subset U - y$. Since   $y = \lim \widetilde \F$, we have that  $V + y \in \widetilde \F$.  Consequently, $V + y$  contains a subset of the form $A \cap (W + B)$, where $B \in \F$, $W \in \Oo_0$. We have that
$$
U \supset V + V +y  \supset   V + A \cap (W  + B) \supset  V + A \cap (W \cap V + B\cap D).
$$
Take a point $a \in A \cap (W \cap V + B\cap D)$. It can be written in the form $a = v + d$, where $v \in V$, $d \in D$. Then,
$$
d = a - v \in A \cap (W \cap V + B\cap D) + V \subset U,
$$
so $U \cap D \neq \emptyset$.
\end{proof}

\begin{corollary}\label{cor-example-seq-noncount}
The sequential completeness does not imply the countable completeness.
\end{corollary}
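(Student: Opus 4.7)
The plan is to revisit the example the authors already flagged as motivation, namely $\ell_2$ equipped with the weak topology $\sigma(\ell_2,\ell_2)$, and show that it simultaneously satisfies sequential completeness and fails countable completeness. Denoting $X=(\ell_2,\sigma(\ell_2,\ell_2))$, the example works because $X$ is separable (the countable $\Q$-linear span of the standard basis is norm-dense, hence weakly dense), so by Theorem \ref{thm-count-comp-sep} countable completeness of $X$ would be equivalent to its being complete as a TVS. Thus all that is needed is: (a) sequential completeness of $X$, and (b) failure of completeness of $X$.

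For (a), a weakly Cauchy sequence $(x_n)$ in $\ell_2$ is in particular bounded (every continuous seminorm is bounded along it, and scalar Cauchy sequences are bounded; alternatively invoke a uniform boundedness argument applied to the evaluation functionals $y\mapsto\langle x_n,y\rangle$). By reflexivity of $\ell_2$ together with Banach--Alaoglu, the closed ball containing $(x_n)$ is weakly compact and weakly metrizable (since $\ell_2$ is separable, the weak topology is metrizable on bounded sets). A weakly Cauchy sequence in a compact metric space converges, giving a weak limit. This yields $X\in\Compl(\FR)$.

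For (b), the standard way is to observe that the topological dual of $X$ is $\ell_2$ itself, so the completion of $X$ embeds into the algebraic dual $\ell_2'$ equipped with $\sigma(\ell_2',\ell_2)$. Any discontinuous linear functional $\varphi\in\ell_2'\setminus\ell_2$ is a limit in $\ell_2'$ of a net in $\ell_2$; the filter of tails of this net (or, equivalently, the filter of neighborhoods of $\varphi$ in $\ell_2'$ intersected with $\ell_2$) is a Cauchy filter on $X$ with no limit in $X$. Hence $X\notin\Compl$, and by Theorem \ref{thm-count-comp-sep} $X$ is not countably complete.

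The main obstacle is packaging the incompleteness argument cleanly without introducing heavy duality machinery that has not been set up in the paper. To avoid this, I would either cite \cite[Chapters 17, 18]{kadets} or \cite{R-R} for the embedding of the completion of $X$ into $\ell_2'$ and the existence of Hamel-basis discontinuous functionals, or else give a self-contained description: pick a Hamel basis $\{e_\alpha\}$ of $\ell_2$ extending the canonical basis, define $\varphi$ to be $1$ on one non-canonical basis vector and $0$ on the rest, and take the filter base consisting of sets $\{x\in\ell_2:|\langle x-\varphi,y_i\rangle|<\eps,\ i=1,\dots,k\}$ over finite tuples $y_1,\dots,y_k\in\ell_2$ and $\eps>0$; this is a Cauchy filter on $X$ without a weak limit, since any weak limit in $\ell_2$ would have to coincide with $\varphi$ on all of $\ell_2$, contradicting $\varphi\notin\ell_2$.
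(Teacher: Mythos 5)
Your proposal is correct and follows essentially the same route as the paper: exhibit $(\ell_2,\sigma(\ell_2,\ell_2))$ as a separable, sequentially complete but incomplete TVS and invoke Theorem \ref{thm-count-comp-sep}. The paper simply cites this example as classical, whereas you supply the standard details (boundedness of weakly Cauchy sequences, reflexivity, and a discontinuous functional on $\ell_2$ witnessing incompleteness), all of which check out.
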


\begin{proof} 
By the previous theorem, in order to get an example of such a kind it is sufficient to find an incomplete separable TVS which is sequentially complete. The classical example for this is the Hilbert space $\ell_2$ equipped with the weak topology. More generally, for every separable infinite-dimensional Banach space $X$ the dual space $X^*$  in the topology $\sigma(X^*, X)$ is sequentially complete (see Theorem \ref{thm-bcompl-Fcoml} below for a stronger result), separable but incomplete. 
\end{proof}

The next result shows that the asymptotic countability assumption in Theorem \ref{thm-count-comp-sep} cannot be omitted.

\begin{theorem}\label{thm-example-non-comp-countcomp}
There exists a non-complete TVS \ $X$ of continuum cardinality which is countably complete.
\end{theorem}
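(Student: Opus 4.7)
The plan is to take $X$ to be the $\Sigma$-product of continuum many real lines,
\[
X = \bigl\{f\in\R^{\mathfrak{c}}:|\supp f|\le\aleph_0\bigr\},
\]
equipped with the subspace topology inherited from the product topology on $\R^{\mathfrak{c}}$. Two preliminary facts should be easy to dispatch. First, $|X|=\mathfrak{c}$: there are $\mathfrak{c}^{\aleph_0}=\mathfrak{c}$ countable subsets of $\mathfrak{c}$, and for each such support $C$ there are $|\R^C|=\mathfrak{c}$ functions supported in $C$. Second, $X$ is not complete: $\R^{\mathfrak{c}}$ is a Hausdorff complete TVS, and $X$ is a dense (every basic product-neighborhood depends only on finitely many coordinates, hence is met by a finitely supported vector) proper (e.g., $(1,1,\ldots)\notin X$) linear subspace of $\R^{\mathfrak{c}}$, hence not closed in its completion, so $X\notin\Compl$.

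The heart of the proof is countable completeness. By Lemma \ref{lem-count-compl-1}(ii) it suffices to show that any Cauchy filter $\F$ on $X$ admitting a countable element $A$ has a limit. Set $C=\bigcup_{a\in A}\supp a$, a countable subset of $\mathfrak{c}$. The linear subspace $Y=\{f\in X:\supp f\subset C\}$ is naturally TVS-isomorphic, via $f\mapsto f|_C$, to $\R^C$ with the product topology, so $Y$ is a complete metrizable TVS containing $A$. The filter $\G$ on $Y$ generated by the base $\{B\cap A:B\in\F\}$ is Cauchy on $Y$: every zero-neighborhood of $Y$ has the form $U\cap Y$ with $U\in\Oo_0(X)$, and choosing $B\in\F$ with $B-B\subset U$ gives $(B\cap A)-(B\cap A)\subset U\cap Y$ because $A\subset Y$ is a linear subspace inclusion. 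Completeness of $Y$ produces a limit $y\in Y\subset X$ of $\G$. Finally, $y$ is a cluster point of $\F$ in $X$: for any $V\in\Oo_y(X)$ and $B\in\F$, the neighborhood $V\cap Y$ of $y$ in $Y$ belongs to $\G$, so some $B'\in\F$ satisfies $B'\cap A\subset V\cap Y$, whence $B\cap B'\cap A$ is a nonempty subset of $B\cap V$. Proposition \ref{rem-clust-lim} then yields $y=\lim\F$.

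The key insight behind this example is that the countable-element hypothesis of Lemma \ref{lem-count-compl-1} automatically localizes any potentially offending Cauchy filter into a complete separable subspace $\R^C$ of $X$, whereas the incompleteness of $X$ manifests itself solely through Cauchy filters with no countable element (for instance, the trace on $X$ of the neighborhood filter of $(1,1,\ldots)$ in $\R^{\mathfrak{c}}$). The only slightly delicate bookkeeping is verifying the Cauchy property of $\G$ on $Y$ and the cluster-point step at the end, both of which reduce to the fact that zero-neighborhoods of $Y$ (respectively, neighborhoods of $y$ in $Y$) are exactly the traces of the corresponding neighborhoods in $X$.
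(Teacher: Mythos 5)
Your proposal is correct, and the example itself is the same as the paper's: the subspace of a big product $\R^{I}$ ($|I|=\mathfrak{c}$) consisting of countably supported functions, with incompleteness coming from density and properness, and cardinality $\mathfrak{c}^{\aleph_0}=\mathfrak{c}$. Where you diverge is in the verification of countable completeness. The paper argues directly from Definition \ref{def-F-complete}: an $\F$-Cauchy sequence $(x_n)$ in $X$ is $\F$-Cauchy in the complete space $\R^{[0,1]}$, hence has a limit $f$ there, and (the step left implicit in the paper) $f$ vanishes off the countable set $\bigcup_n \supp x_n$, so $f\in X$ and is the $\F$-limit in the subspace topology. You instead route through Lemma \ref{lem-count-compl-1}(ii), localize the Cauchy filter into the complete metrizable subspace $Y\cong\R^{C}$ determined by the countable union of supports, take the limit there, and finish with the cluster-point argument via Proposition \ref{rem-clust-lim}. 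Your route is longer but more self-contained and makes explicit the ``localization'' mechanism that the paper's one-line proof glosses over; the paper's route is shorter because it borrows completeness of the ambient product wholesale and only needs the (unstated) observation that the limit inherits countable support. Both are valid; if anything, your write-up supplies the detail the published proof omits.
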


\begin{proof} 
Consider $\R^{[0, 1]}$ -- the space of all functions $f: [0, 1] \to \R$ equipped with the standard product topology, i.e. the topology of pointwise convergence. The space $X$ we are looking for will be the subspace of  $\R^{[0, 1]}$ consisting of functions with countable support. In other words,  $f: [0, 1] \to \R$ lies in $X$ if the set $\supp f := \{t \in [0, 1]: f(t) \neq 0\}$ is at most countable. Since $X$ is a dense proper subspace of  $\R^{[0, 1]}$, it cannot be complete (a complete subspace of any TVS is closed \cite[Section 16.2.2, Theorem 4]{kadets}). Let us demonstrate that  $X$ is countably complete.

Let $\F$ be a filter on $\N$, and $x = (x_n) \in 2^X$ be an $\F$- Cauchy sequence in $X$. Then $x$ is  $\F$- Cauchy as a sequence in  $\R^{[0, 1]}$. By the completeness of  $\R^{[0, 1]}$, there is $f \in \R^{[0, 1]}$ such that $f = \lim_\F x_n$ in  $\R^{[0, 1]}$.
\end{proof}

\subsection{Completeness and boundedness}

In the very recent paper \cite{bondt} Ben De Bondt and Hans Vernaeve introduced several concepts that are very useful for our study. Below we present the most important for us particular case.

Let $X$ be a Banach space, $(x_n^*) \subset X^*$ be a sequence of functionals, and $\F$ be a free filter on $\N$. The sequence $(x_n^*) $ is said to be \emph{ pointwise $\F$-bounded}, if for every $x \in X$ there is a $C = C(x) > 0$ such that $\{n \in \N: |x_n^*(x)| < C \} \in \F$. The sequence $(x_n^*) $ is said to be  \emph{$\F$-bounded} (\emph{stationary $\F$-bounded}), if there is a $C > 0$ such that $\{n \in \N: \|x_n^*\| < C \} \in \F$ ($\{n \in \N: \|x_n^*\| < C \}$ is $\F$-stationary). 

A free filter  $\F$ on $\N$ is called a \emph{B-UBP-filter} (\emph{stationary B-UBP-filter}), if for every Banach space $X$ every pointwise $\F$-bounded sequence  $(x_n^*) \subset X^*$ is $\F$-bounded. This property is weaker (at least formally) than the property of being \emph{(stationary) Banach-UBP-filter}, for which the authors of  \cite{bondt} demanded a similar statement for linear continuous operators from $X$ to arbitrary locally convex space $Y$ with $\F$-equicontinuity instead of $\F$-boundedness in the conclusion.

The fact that the $\FR$ is  B-UBP is just the classical Banach-Steinhaus theorem. On the other hand, many classical filters $\F$ do not enjoy this property, because of the existence of $\F$-unbounded pointwise $\F$-convergent sequences in dual Banach spaces. The latter effect was remarked in
\cite[Theorem 1]{conkad} for the statistical convergence and was investigated in detail in \cite{gakad}, \cite{Kad-cyl}, and \cite{KLO2010}.

Ben De Bondt and  Hans Vernaeve presented non-trivial descriptions and examples of  Banach-UBP filters and stationary Banach-UBP filters and demonstrated that the existence of of B-UBP ultrafilters is consistent in the standard ZFC axiom system.

This motivates the following definition.

\begin{definition} \label{def-F-complete}
Let $\F$ be a free filter on $\N$. A \tvs\ $X$ is said to be \textit{boundedly complete over $\F$}, if every bounded Cauchy sequence over $\F$ in $X$ has a limit over $\F$. We denote this property by $X \in \Compl_b(\F)$. $X$ is said to be \textit{boundedly countably complete} if it is boundedly complete  over all filters $\F$ on $\N$.
\end{definition}

The next theorem gives a plenty of examples.

\begin{theorem}\label{thm-dual-bcompl}
Let $Y$ be a Banach space, then $(Y^*, \sigma(Y^*, Y) )$ is boundedly countably complete.
\end{theorem}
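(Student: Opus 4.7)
My plan is to take a bounded Cauchy sequence $(x_n^*) \subset Y^*$ over a free filter $\F$ on $\N$ in the topology $\sigma(Y^*, Y)$, construct a candidate weak* limit $x^* \in Y^*$ by taking filter limits coordinatewise, and then check that $x^*$ really is the weak* limit over $\F$. The first observation I would make is that a set is bounded in $\sigma(Y^*, Y)$ if and only if it is pointwise bounded on $Y$, which by the classical Banach--Steinhaus theorem is the same as being norm-bounded. Hence the hypothesis supplies a constant $C > 0$ with $\|x_n^*\| \le C$ for every $n \in \N$.

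Next, I would unfold the Cauchy condition on the filter $x[\F]$ against the subbasic weak* neighborhoods $\{z^* : |z^*(y)| < \eps\}$ of zero: for every $y \in Y$ and every $\eps > 0$ there exists $B \in \F$ with $|x_n^*(y) - x_m^*(y)| < \eps$ for all $n, m \in B$. In other words, each scalar sequence $(x_n^*(y))_{n \in \N}$ is Cauchy over $\F$ in $\K$. Since $\K$ is complete, Theorem \ref{thm-ComplF1F2}(1) gives $\K \in \Compl(\F)$, so the limit $x^*(y) := \lim_\F x_n^*(y)$ is well defined for every $y \in Y$. Linearity of $x^*$ follows by intersecting two filter sets and passing to the limit, and continuity is automatic from the norm bound: $|x_n^*(y)| \le C \|y\|$ for all $n$ together with the fact that $\{t \in \K : |t| \le C\|y\|\}$ is closed forces $|x^*(y)| \le C \|y\|$, so $x^* \in Y^*$.

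Finally, I would check that $x_n^* \to x^*$ in $\sigma(Y^*, Y)$ over $\F$. A basic weak* neighborhood of $x^*$ has the form $V = \{z^* \in Y^* : |z^*(y_i) - x^*(y_i)| < \eps,\ i = 1, \ldots, k\}$ for some $y_1, \ldots, y_k \in Y$ and $\eps > 0$. By construction of $x^*$, for each $i$ there is $B_i \in \F$ with $|x_n^*(y_i) - x^*(y_i)| < \eps$ for every $n \in B_i$, whence $B := \bigcap_{i=1}^k B_i \in \F$ and $x_n^* \in V$ for all $n \in B$. This proves $x^* = \lim_\F x_n^*$ in the weak* topology, as required. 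There is no serious obstacle in this argument; the point worth highlighting is that the boundedness hypothesis is used precisely at one place, namely to ensure that the pointwise filter-limit functional $x^*$ is actually continuous on $Y$, which is exactly why the statement is about \emph{bounded} completeness rather than completeness.
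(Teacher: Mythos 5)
Your proposal is correct and follows essentially the same route as the paper's proof: reduce weak* boundedness to norm boundedness via Banach--Steinhaus, define the candidate limit by taking filter limits of the scalar sequences $(x_n^*(y))$ using completeness of $\K$, verify linearity and the bound $|x^*(y)| \le C\|y\|$, and conclude weak* convergence over $\F$. The only difference is that you spell out the final neighborhood-chasing step in more detail, which the paper leaves implicit.
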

\begin{proof}
The proof repeats almost literally the demonstration of the well-known facts \cite[Section 6.4.3, Theorems 1 and 2]{kadets} about the ordinary pointwise convergence. Namely, let $(x_n^*) \subset Y^*$ be a bounded sequence. Recall, that $\sigma(Y^*, Y)$-boundedness is equivalent to the boundedness in norm (Banach-Steinhaus), so $\sup_n\|x_n*\| = C < \infty$. Assume that for some filter $\F$ on $\N$ the sequence $(x_n^*)$ is $\F$-Cauchy in topology $\sigma(Y^*, Y)$. This means that for every $x \in Y$ the sequence  $(x_n^*(x)) \subset \K$ is $\F$-Cauchy. By completeness of $\K$ (which is either $\R$ or $\C$),   $\lim_{\F} x_n^*(x)$ exists for all $x\in Y$.  Consider the map $f \colon Y \to \K$ given by the recipe $f(x) = \lim_{\F} x_n^*(x)$. At first, it is a linear functional. Indeed,
$ f(ax_1 + bx_2) = \lim_{\F} x_n^* (ax_1 + bx_2) = a \lim_{\F} x_n^* (x_1) + b \lim_{\F} x_n^* (x_2) = af(x_1) + f(x_2)$.
At second, the estimate $|f(x)| = \lim_{\F} \|x_n^*(x)\| \le C\|x\|$, which holds for all $x\in Y$, demonstrates that $f$ is continuous, so $f \in Y^*$, and $f = \lim_{\F} x_n^*$ in  $\sigma(Y^*, Y)$.
\end{proof}

\begin{theorem}\label{thm-bcompl-Fcoml}
Let $Y$ be a Banach space, $\F$ be a stationary B-UBP-filter on $\N$, then $(Y^*, \sigma(Y^*, Y) )$ is $\F$-complete.
\end{theorem}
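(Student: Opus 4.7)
The plan is to mimic the proof of Theorem \ref{thm-dual-bcompl} almost verbatim, but extract the norm bound from the stationary B-UBP hypothesis rather than assuming it a priori. First I would take an arbitrary $\F$-Cauchy sequence $(x_n^*)\subset Y^*$ in the topology $\sigma(Y^*,Y)$ and observe that for every $x\in Y$ the scalar sequence $(x_n^*(x))$ is $\F$-Cauchy in $\K$, hence $\F$-converges to some scalar, which I will call $f(x)$. Since every $\F$-convergent scalar sequence is pointwise $\F$-bounded (with $\eps=1$ in the definition of $\F$-convergence, $\{n:|x_n^*(x)|<|f(x)|+1\}\in\F$), the sequence $(x_n^*)$ is pointwise $\F$-bounded.

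Next I would invoke the stationary B-UBP assumption on $\F$ to produce a constant $C>0$ such that the set $A:=\{n\in\N:\|x_n^*\|<C\}$ is $\F$-stationary. Note that $A$ need not belong to $\F$, only that it meets every member of $\F$. Linearity of $f$ is automatic from the linearity of each $x_n^*$ and of scalar $\F$-limits, exactly as in the proof of Theorem \ref{thm-dual-bcompl}, so the only remaining task is to show $f\in Y^*$.

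For continuity of $f$, I would fix $x\in Y$ and $\eps>0$. The set $B_\eps:=\{n:|x_n^*(x)-f(x)|<\eps\}$ lies in $\F$, so $A\cap B_\eps\neq\emptyset$ by $\F$-stationarity of $A$. Picking any $n\in A\cap B_\eps$ yields
$$
|f(x)| \le |x_n^*(x)|+\eps \le C\|x\|+\eps,
$$
and letting $\eps\to 0$ gives $|f(x)|\le C\|x\|$. Hence $f\in Y^*$, and the identity $f(x)=\lim_\F x_n^*(x)$ for every $x\in Y$ is, by definition, $f=\lim_\F x_n^*$ in $\sigma(Y^*,Y)$.

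The main (really only) substantive obstacle is the passage from pointwise $\F$-boundedness to a \emph{uniform} norm bound that survives the limit. The stationary (rather than full-membership) version of the B-UBP property turns out to be exactly sufficient, because the bound $|f(x)|\le C\|x\|$ needed for continuity can be read off from arbitrarily accurate approximants $x_n^*(x)$ with $n$ in any $\F$-stationary set; no norm bound on the whole tail is required.
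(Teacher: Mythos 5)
Your proof is correct. The one point that needs care --- extracting a norm bound for the limit functional $f$ from a merely $\F$-stationary set $A$ of indices with $\|x_n^*\|<C$ --- is handled properly: since $B_\eps=\{n:|x_n^*(x)-f(x)|<\eps\}\in\F$ and $A$ meets every member of $\F$, the intersection $A\cap B_\eps$ is nonempty and yields $|f(x)|\le C\|x\|+\eps$. Linearity of $f$ and the identification of pointwise $\F$-convergence with $\F$-convergence in $\sigma(Y^*,Y)$ are routine, as you say.

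Your route is genuinely different from the paper's, and arguably cleaner. The paper does not define $f$ directly; instead it restricts the sequence to the stationary set $A$ via a bijection $g:\N\to A$, builds the auxiliary filters $\F_A$ (generated by $\{A\cap B: B\in\F\}$) and $\F_g$, applies Theorem \ref{thm-dual-bcompl} (bounded countable completeness of $(Y^*,\sigma(Y^*,Y))$) to the norm-bounded subsequence to obtain a limit $f\in Y^*$ over $\F_A\supset\F$, concludes that $f$ is an $\F$-cluster point of the original sequence, and finally upgrades the cluster point to a limit via Proposition \ref{rem-clust-lim} (a Cauchy filter with a cluster point converges to it). What the paper's approach buys is modularity: the bounded case is quoted as a black box, and the cluster-point-to-limit mechanism is the same device used elsewhere in the paper. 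What your approach buys is economy: you bypass the auxiliary filters, the reindexing bijection, and the cluster-point argument entirely, constructing the limit functional explicitly and using stationarity only once, at the very end, to pass the bound $C$ through the limit. Both proofs ultimately rest on the same two pillars --- completeness of $\K$ for scalar $\F$-Cauchy sequences and the stationary uniform bound --- so either is acceptable; yours makes more transparent exactly why the stationary (rather than full-membership) version of the B-UBP property suffices.
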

\begin{proof}
Let $x^* = (x_n^*) \subset Y^*$ be $\F$-Cauchy in the topology $\sigma(Y^*, Y)$. Then for every $x \in Y$ the sequence  $(x_n^*(x)) \subset \K$ is $\F$-Cauchy, which implies that $x^*$ is pointwise $\F$-bounded. From the definition of stationary B-UBP-filter we deduce the existence of an $\F$-stationary set $A \subset \N$ such that $\sup_{n \in A} \|x_n^*\| < \infty$. Consider the collection $G$ all sets of the form $A\cap B$, $B \in \F$. $G$ is a filter base. Denote $\F_A$ the filter on $\N$ generated by this particular base $G$.  Let $g : \N \to A$ be a bijection. Denote $\F_g$ the filter of all those $B \subset \N$ for which $g(B) \in G$. Finally, consider $y^* = x^* \circ g$. Then $y^* $ is pointwise bounded and is $\F_g$-Cauchy. By Theorem \ref{thm-dual-bcompl} the sequence  $y^*$ is pointwise $\F_g$-convergent to some $f \in Y^*$. This means that the sequence $x^*$ converges to $f$  with respect to the filter $g[\F_g] = F_A$. By the construction,  $\F_A \supset \F$, consequently,  $f$  is an $\F$-cluster point for $x^* = (x_n^*)$  in $\sigma(Y^*, Y)$.  But $x^*$ is $\F$-Cauchy, so its cluster point $f$ is its limit (Proposition \ref{rem-clust-lim}).
\end{proof}

We don't know whether for general TVS the sequential completeness implies $f$-statistical completeness for every unbounded modulus $f$, but we have an analogous result for bounded completeness in locally convex spaces.

Recall that in the particular case of  the modulus function $f(t) = t$,  $f$-statistical convergence reduces to the well-known statistical convergence, which is generated by the filter $\F_{st}$, whose elements are those $A \subset \N$, for which
\begin{equation} \label{eq:stat-conv}
\lim_{n \to \infty} \frac{ |A(n)|}{n} = 1,
\end{equation}
where $A(n) = A \cap \overline{1, n}$. First, we need a generalization of the following fact that was remarked already in \cite{fast}: if a bounded numerical sequence $(x_n)$ converges statistically to a number $a$, then it is Cesaro convergent to $a$, i.e.
$$
\lim_{n \to \infty} \frac{1}{n}\sum_{k=1}^n x_k = a.
$$
\begin{lemma}\label{lem-loc-conv-stat-Cesaro}
Let $X$ be a locally convex TVS and $x = (x_n) \in 2^X$ be a bounded $\F_{st}$-Cauchy sequence in $X$, then the sequence   $y = (y_n) \in 2^X$, where  $y_n = \frac{1}{n}\sum_{k=1}^n x_k$, is a bounded Cauchy sequence.
\end{lemma}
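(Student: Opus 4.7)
The plan is to verify boundedness and the Cauchy property of $(y_n)$ separately, with both relying on local convexity. For boundedness, given any $U \in \Oo_0$ I would pick a convex balanced $W \in \Oo_0$ with $W \subseteq U$; since $(x_n)$ is bounded there exists $t > 0$ with $\{x_n : n \in \N\} \subseteq tW$, and convexity of $tW$ then forces every $y_n$, being a convex combination of $x_1, \ldots, x_n$, to lie in $tW \subseteq tU$.

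The substantive part is showing that $(y_n)$ is Cauchy, which I would handle by the classical ``split into a density-one set and a density-zero set'' trick. Fix $U \in \Oo_0$ and, using local convexity, choose a convex balanced $V \in \Oo_0$ with $V + V + V + V \subseteq U$. By $\F_{st}$-Cauchyness pick $A \in \F_{st}$ with $x(A) - x(A) \subseteq V$ and fix any $m_0 \in A$; then $x_k - x_{m_0} \in V$ for every $k \in A$, while boundedness of $(x_n)$ combined with translation by $-x_{m_0}$ yields some $s \geq 1$ with $x_k - x_{m_0} \in sV$ for every $k \in \N$.

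Writing $A(n) = A \cap \overline{1,n}$ and $B(n) = \overline{1,n} \setminus A$, the decomposition
$$
y_n - x_{m_0} = \frac{1}{n}\sum_{k \in A(n)}(x_k - x_{m_0}) + \frac{1}{n}\sum_{k \in B(n)}(x_k - x_{m_0})
$$
places the first sum in $\frac{|A(n)|}{n}V$, which is contained in $V$ because $V$ is balanced and $|A(n)|/n \leq 1$; the second sum lies in $\frac{s|B(n)|}{n}V$, and the density condition $|B(n)|/n \to 0$ supplies some $N$ such that $s|B(n)|/n \leq 1$ for all $n \geq N$, putting this sum too into $V$. Hence $y_n - x_{m_0} \in V + V$ for all $n \geq N$, and therefore $y_n - y_m \in V + V + V + V \subseteq U$ for all $n, m \geq N$, proving $(y_n) \in \Cauchy(\FR)$.

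The one slightly delicate point — and really the only place local convexity is essential — is controlling the ``bad'' part: the vectors $x_k - x_{m_0}$ for $k \notin A$ are uniformly bounded only by the potentially large factor $s$, so it is crucial that $|B(n)|/n \to 0$ and that the balanced structure of $V$ lets us absorb the small prefactor $s|B(n)|/n$ into $1$. This is the locally convex analogue of the classical fact that a bounded statistically convergent numerical sequence is Cesaro convergent, where picking a convex balanced $V$ is exactly what makes averages of elements of $V$ stay in $V$.
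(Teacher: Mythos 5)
Your proof is correct, and it implements the same core idea as the paper's --- split the average over the density-one set $A$ and its density-zero complement, and use convexity plus balancedness to keep averages of small vectors small --- but the bookkeeping is genuinely different and somewhat cleaner. The paper fixes an open balanced convex $U$, passes to the associated seminorm $p$, and estimates $p(y_n-y_m)$ directly via a symmetric decomposition involving the double average $\frac{1}{|A(n)||A(m)|}\sum_{k\in A(n),\,j\in A(m)}(x_k-x_j)$ together with correction terms of the form $\bigl(\frac{1}{|A(n)|}-\frac1n\bigr)\sum_{k\in A(n)}x_k$, arriving at the explicit bound $\frac18+\frac18+\frac12+\frac18+\frac18=1$. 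You instead anchor everything at a fixed $x_{m_0}$ with $m_0\in A$, which removes the double sum and the normalization corrections entirely, and you work with convex balanced neighborhoods rather than seminorms (the two are of course equivalent in a locally convex space). The one point worth spelling out in your write-up is why $\frac1n\sum_{k\in A(n)}(x_k-x_{m_0})\in\frac{|A(n)|}{n}V$: you should factor it as $\frac{|A(n)|}{n}$ times the arithmetic mean of $|A(n)|$ elements of $V$, invoke convexity for the mean and balancedness for the scalar $\frac{|A(n)|}{n}\le 1$ (and likewise for the $B(n)$ part with $sV$); you gesture at this but it is the only place where a reader might want the half-line of detail. Your version has the mild additional virtue of making transparent where the paper's constant $\frac{1}{8C}$ comes from: it is just the threshold making the ``bad'' contribution absorbable, which in your formulation is the condition $s|B(n)|/n\le 1$.
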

\begin{proof}
Let $U \in \Oo_0$ be an open balanced convex neighborhood of zero and $p$ be the seminorm whose open unit ball is equal to $U$. Denote $C = \sup_n p(x_n)$. Then all $x_k \in C U$ and, by convexity, all $y_n \in C U$, which proves the boundness of $y$. It remains to show that $y$ is a Cauchy sequence.

According to our assumption, there is a set $A \subset \N$ that satisfies \eqref{eq:stat-conv} and such that $p(x_n - x_m) <  \frac{1}{2}$ for all $m, n \in A$
Select an $N \in \N$ in such a way that for all $n \ge N$
$$
\frac{ |(\N \setminus A)(n)|}{n} < \frac{1}{8C}.
$$
Then, for $n, m \ge N$ we have

\begin{align*}
&p\left( y_n - y_m \right) = p\left(  \frac{1}{n}\sum_{k=1}^n x_k -  \frac{1}{m}\sum_{j=1}^m x_j \right) = p\Bigl( \frac{1}{n}\sum_{k \in (\N \setminus A) (n)} x_k  \\ 
&-  \frac{1}{m}\sum_{j \in (\N \setminus A) (m)} x_j   +  \frac{1}{|A(n)| |A(m)|}\sum_{k \in  A(n), \ j \in  A (m)} (x_k - x_j)  \\
&-   \left(\frac{1}{|A(n)| } - \frac1n \right)\sum_{k \in  A(n)}x_k - \left(\frac{1}{|A(m)| } - \frac1m \right)\sum_{j \in  A(m)}x_j)\Bigr) 
\end{align*} 
\begin{align*}
&\le  \frac{1}{n}\sum_{k \in (\N \setminus A) (n)} p(x_k) 
+  \frac{1}{m}\sum_{j \in (\N \setminus A) (m)} p(x_j)   \\
&+  \frac{1}{|A(n)| |A(m)|}\sum_{k \in  A(n), \ j \in  A (m)} p(x_k - x_j)  \\
&+   \left(\frac{1}{|A(n)| } - \frac1n \right)\sum_{k \in  A(n)}p(x_k) + \left(\frac{1}{|A(m)| } - \frac1m \right)\sum_{j \in  A(m)}p(x_j)  \\
& <  \frac{C}{n}  \frac{n}{8C}   +  \frac{C}{m}  \frac{m}{8C}  +   \frac{1}{2} +  \frac{C}{n}  \frac{n}{8C} +  \frac{C}{m}  \frac{m}{8C}    = 1,
\end{align*} 
that is $y_n - y_m \in U$.
\end{proof}

\begin{theorem}\label{thm-loc-conv-bFst-compl}
Let $X$ be a boundedly sequentially complete locally convex TVS, then $X \in \Compl_b(\F_{st})$.
\end{theorem}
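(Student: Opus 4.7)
The plan is to use Lemma \ref{lem-loc-conv-stat-Cesaro} as a bridge from the $\F_{st}$-Cauchy hypothesis to ordinary sequential completeness, and then recover the $\F_{st}$-limit from the Cesaro limit by a density argument carried out seminorm-by-seminorm.

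Given a bounded $\F_{st}$-Cauchy sequence $x = (x_n)$, I would first feed it into Lemma \ref{lem-loc-conv-stat-Cesaro} to obtain that the Cesaro means $y_n = \frac{1}{n}\sum_{k=1}^n x_k$ form a bounded Cauchy sequence. Bounded sequential completeness then supplies a point $y \in X$ with $y = \lim_n y_n$. My claim is that this $y$ is also the $\F_{st}$-limit of $(x_n)$, which is what needs to be proved.

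To verify the claim, fix an arbitrary open balanced convex neighborhood $U \in \Oo_0$ with Minkowski functional $p$, and set $C = \sup_n p(x_n) < \infty$ (finite by boundedness). Apply the $\F_{st}$-Cauchy property to the neighborhood $\frac{1}{4} U$ to obtain a density-one set $A \in \F_{st}$ with $p(x_n - x_m) < \frac{1}{4}$ for all $n, m \in A$. For each $n \in A$, split
$$
y_n - x_n = \frac{1}{n}\sum_{k \in A(n)} (x_k - x_n) + \frac{1}{n}\sum_{k \in (\N \setminus A)(n)} (x_k - x_n),
$$
where $A(n) = A \cap \overline{1, n}$. The first sum has $p$-value at most $\frac{1}{4}$, while the second is bounded by $2C \cdot |(\N \setminus A)(n)|/n$, which tends to $0$ because $A$ has density one. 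Combined with $p(y_n - y) \to 0$, this yields $p(x_n - y) < 1$ for all sufficiently large $n \in A$.

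Consequently $\{n : x_n - y \in U\}$ contains a cofinite subset of $A$, and therefore has density one and belongs to $\F_{st}$. Since $U$ was arbitrary, $y = \lim_{\F_{st}} x_n$, proving the theorem. No serious obstacle is anticipated: the main analytic estimate---controlling Cesaro tails via the density of $A$---has essentially been done inside Lemma \ref{lem-loc-conv-stat-Cesaro}, and what remains here is a standard adaptation of the classical ``bounded statistical convergence implies Cesaro convergence'' principle to the locally convex setting.
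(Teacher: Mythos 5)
Your proposal is correct and follows essentially the same route as the paper: pass to the Cesaro means via Lemma \ref{lem-loc-conv-stat-Cesaro}, use bounded sequential completeness to get their limit $y$, and then show $y$ is the $\F_{st}$-limit of $(x_n)$ by estimating $p(x_n-y)$ for $n$ in the density-one set $A$ on which the sequence is $p$-Cauchy, splitting the Cesaro tail over $A(n)$ and $(\N\setminus A)(n)$. Your bookkeeping (working with $\frac1n\sum_{k\in A(n)}(x_k-x_n)$ directly) is a slightly cleaner version of the paper's own estimate, but the argument is the same.
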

\begin{proof}
Let $x = (x_n) \in 2^X$ be a bounded $\F_{st}$-Cauchy sequence in $X$. According to the previous lemma,  the sequence   $y = (y_n) \in 2^X$, where  $y_n = \frac{1}{n}\sum_{k=1}^n x_k$, is a bounded Cauchy sequence, so it has a limit in $X$. 

Denote $a = \lim_{n \to \infty} y_n \in X$. Let us demonstrate that $a = \lim_{\F_st} y$. To do this, consider $U$, $p$, $C$, $A$ and $N$ from the proof of Lemma \ref{lem-loc-conv-stat-Cesaro}. Also, fix such an $M > N$ that $p(y_n - a) < \frac14$ for all $n > M$. Then, for every $n \in A \setminus \overline{1, M}$ we have
$$
p(x_n - a) \le \frac14 + p(x_n - y_n) \le \frac14  + p\left(\frac{1}{n}\sum_{k \in (\N \setminus A) (n)} x_k \right)  + p\left( x_n  -  \frac{1}{n}\sum_{k \in  A(n)}  x_k  \right)   
$$
\begin{align*}
&\le \frac14 +  \frac{C}{n}  \frac{n}{8C} +  p\left( \frac{1}{|A(n)|}\sum_{k \in  A(n)}  (x_n  - x_k ) \right)
+  \left(\frac{1}{|A(n)|}- \frac{1 }{n}  \right) p\left(\sum_{k \in  A(n)}  x_k  \right) \\
&\le \frac14 +  \frac{C}{n}  \frac{n}{8C} +  \frac{1}{2} +  \frac{C}{n}  \frac{n}{8C} = 1,
\end{align*} 
that is $x_n - a \in U$.
\end{proof}

\begin{corollary}\label{cor-loc-conv-bFst-compl}
Let $X$ be a boundedly sequentially complete locally convex TVS, then $X \in \Compl_b(\F_{f-st})$ for every unbounded modulus function $f$.
\end{corollary}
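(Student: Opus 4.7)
The plan is to reduce the corollary to Theorem~\ref{thm-loc-conv-bFst-compl} via the filter inclusion $\F_{f-st} \subseteq \F_{st}$, combined with the bounded-completeness analogue of item~(4) of Theorem~\ref{thm-ComplF1F2}. So the proof splits into two essentially independent pieces: a combinatorial density comparison between $d$ and $d_f$, and a routine filter-theoretic wrap-up.

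For the density comparison, I would prove that for any unbounded modulus $f$ and any $B \subseteq \N$, $d_f(B) = 0$ implies $d(B) = 0$; this immediately gives $\F_{f-st} \subseteq \F_{st}$ straight from the definitions of the two filters. The key observation is that iterating the subadditivity of $f$ yields $f(n) \le (k+1) f(m)$ whenever $n = km + r$ with $0 \le r < m$, which (setting $m = |B \cap \overline{1,n}|$) gives the pointwise inequality
$$
\frac{f(|B \cap \overline{1,n}|)}{f(n)} \ge \frac{|B \cap \overline{1,n}|}{n + |B \cap \overline{1,n}|}.
$$
If the left-hand side tends to $0$, then so does the right-hand side, and the latter can only happen if $|B \cap \overline{1,n}|/n \to 0$. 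I expect this elementary inequality to be the main content of the argument; only the edge case $|B \cap \overline{1,n}| = 0$ needs a separate line, and it is trivial.

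For the wrap-up, I would mirror the proof of item~(4) of Theorem~\ref{thm-ComplF1F2} in the bounded setting. A bounded $\F_{f-st}$-Cauchy sequence $x$ is, a fortiori, bounded and $\F_{st}$-Cauchy, so by Theorem~\ref{thm-loc-conv-bFst-compl} it admits an $\F_{st}$-limit $a \in X$. The inclusion $\F_{f-st} \subseteq \F_{st}$ gives $x[\F_{f-st}] \subseteq x[\F_{st}]$, so $a$ is a cluster point of the Cauchy filter $x[\F_{f-st}]$, and Proposition~\ref{rem-clust-lim} promotes this cluster point to the limit. This part introduces no new ideas beyond what already appears in Theorem~\ref{thm-ComplF1F2}; the real work is entirely in the density comparison of the preceding paragraph.
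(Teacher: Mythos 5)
Your proposal is correct and follows essentially the same route as the paper: reduce to Theorem~\ref{thm-loc-conv-bFst-compl} via the inclusion $\F_{f-st} \subseteq \F_{st}$ and the bounded-sequence version of item~(4) of Theorem~\ref{thm-ComplF1F2} (cluster point of a Cauchy filter $\Rightarrow$ limit, by Proposition~\ref{rem-clust-lim}). The only difference is that you prove the inclusion $\F_{f-st} \subseteq \F_{st}$ directly via the subadditivity estimate $f(n) \le (k+1)f(m)$ --- a correct and self-contained argument --- whereas the paper simply cites this fact from the reasoning preceding \cite[Corollary 2.2]{aizpuru}.
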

\begin{proof}
Since $\F_{f-st} \subset \F_{st}$ (see the reasoning just before \cite[Corollary 2.2]{aizpuru}), it remains to apply the statement (4) of theorem \ref{thm-ComplF1F2} in its version for bounded sequences, which works the same way as the original one.  
\end{proof}

\subsection{Completeness and ultrafilters}

Let us start with an easy observation.
\begin{remark} \label{rem-all-ultraf}
If a TVS $X$ is complete with respect to all ultrafilters on $\N$, then $X$ is countably complete.  Select an ultrafilter $\U \supset \F$. By our assumption, $X \in \Compl(\U)$, and it remains to apply  (4) of Theorem \ref{thm-ComplF1F2} in order to show that  $X \in \Compl(\F)$.
\end{remark}

The above remark motivates some natural questions. At first, is it true that the completeness with respect to one ultrafilter implies the completeness with respect to all other ultrafilters (and hence implies the countable completeness)? If the answer is negative, then the second question arises: does the sequential  completeness imply completeness with respect to some ultrafilter? The negative answers to both questions are given below (for the first one the answer is given under an additional set-theoretic assumption).

\begin{theorem}\label{thm-bcompl-Fcoml}
Under the Martin's axiom there are free ultrafilters $\U_1, \U_2$ on $\N$ and a TVS $X$ such that $X \in \Compl(\U_1)$, but  $X \notin \Compl(\U_2)$.
\end{theorem}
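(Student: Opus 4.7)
The plan is to take $X = (\ell_1, \sigma(\ell_1, c_0))$, the dual of $c_0$ equipped with the weak-$^*$ topology. I shall construct two free ultrafilters $\U_1, \U_2$ on $\N$ with opposite behavior: $\U_1$ will make $X$ complete by invoking the theorem proved above on B-UBP filters, while $\U_2$ will admit a weak-$^*$ Cauchy sequence in $\ell_1$ with no weak-$^*$ limit in $\ell_1$.

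Under Martin's axiom, the result of Ben De Bondt and Hans Vernaeve cited above (see \cite{bondt}) provides a free B-UBP ultrafilter $\U_1$ on $\N$. For an ultrafilter the notions of B-UBP and stationary B-UBP coincide, since a subset of $\N$ is $\U_1$-stationary precisely when it belongs to $\U_1$. Consequently, the theorem proved above stating that $(Y^*, \sigma(Y^*, Y))$ is $\F$-complete for every stationary B-UBP filter $\F$ yields directly $X \in \Compl(\U_1)$. This is the only place where MA is invoked.

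The second ultrafilter $\U_2$ is built in ZFC. Fix a discontinuous linear functional $f_0 \in c_0' \setminus \ell_1$, whose existence follows from a standard Hamel basis argument. Choose a countable norm-dense sequence $(x_n^*)_{n \in \N}$ in $\ell_1$, for instance the functionals with finite rational support. For each $y \in c_0$ and each $\eps > 0$ put
$$
A(y, \eps) := \{n \in \N : |x_n^*(y) - f_0(y)| < \eps\}.
$$
I claim that $\mathcal G := \{A(y, \eps) : y \in c_0,\ \eps > 0\} \cup \FR$ has the finite intersection property. Given linearly independent $y_1, \ldots, y_k \in c_0$ and positive $\eps_1, \ldots, \eps_k$, Hahn-Banach supplies functionals $g_i \in \ell_1$ with $g_i(y_j) = \delta_{ij}$, so the evaluation map $T \colon \ell_1 \to \K^k$, $T(x^*) = (x^*(y_1), \ldots, x^*(y_k))$, is a continuous linear surjection; hence $T(\{x_n^*\})$ is dense in $\K^k$ and, since $\K^k$ has no isolated points, every neighborhood of $(f_0(y_1), \ldots, f_0(y_k))$ contains infinitely many of the $T(x_n^*)$. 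In particular $\bigcap_i A(y_i, \eps_i)$ is infinite, and intersecting with a cofinite set leaves an infinite set. Extend the filter generated by $\mathcal G$ to a free ultrafilter $\U_2$ on $\N$ by Zorn's lemma. By construction $\lim_{\U_2} x_n^*(y) = f_0(y)$ for every $y \in c_0$, so $(x_n^*)$ is $\U_2$-Cauchy in $\sigma(\ell_1, c_0)$; yet it has no $\sigma(\ell_1, c_0)$-limit in $\ell_1$, since otherwise such a limit $g \in \ell_1$ would satisfy $g(y) = f_0(y)$ for all $y \in c_0$, forcing the continuous $g$ to coincide with the discontinuous $f_0$. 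Therefore $X \notin \Compl(\U_2)$.

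The only substantial set-theoretic input is the De Bondt--Vernaeve construction of a B-UBP ultrafilter, which is exactly why MA enters the statement; the remainder is a direct FIP verification together with Zorn's lemma. The most delicate step is checking the FIP, which rests on the combination of norm-density of $\{x_n^*\}$ in $\ell_1$ with the surjectivity of the finite-dimensional evaluation map furnished by Hahn--Banach.
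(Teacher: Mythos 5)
Your proposal is correct. The first half coincides with the paper's argument: under MA you take a De Bondt--Vernaeve B-UBP ultrafilter $\U_1$ (which is automatically a stationary B-UBP filter, since every element of a filter is stationary for it) and apply the theorem that $(Y^*,\sigma(Y^*,Y))$ is $\F$-complete for stationary B-UBP filters $\F$; the paper does exactly this with a general separable infinite-dimensional $Y$ where you take $Y=c_0$. The second half is where you genuinely diverge. The paper obtains $\U_2$ non-constructively and in one line: $(Y^*,\sigma(Y^*,Y))$ is $w^*$-separable and incomplete, hence by Theorem \ref{thm-count-comp-sep} not countably complete, hence by Remark \ref{rem-all-ultraf} it fails $\Compl(\U_2)$ for \emph{some} ultrafilter $\U_2$. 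You instead build $\U_2$ explicitly, by forcing a norm-dense sequence $(x_n^*)\subset\ell_1$ to $\U_2$-converge pointwise to a prescribed discontinuous functional $f_0\in c_0'\setminus\ell_1$; the FIP verification via the biorthogonal system and the density of the image of $\{x_n^*\}$ under the evaluation map $T:\ell_1\to\K^k$ is sound (one only needs $T^{-1}(V)$ open and nonempty, not the open mapping theorem), and the conclusion that $(x_n^*)$ is $\U_2$-Cauchy without a $\sigma(\ell_1,c_0)$-limit in $\ell_1$ follows. Your route costs more work but buys more information: it exhibits a concrete witnessing Cauchy sequence and shows one may target \emph{any} prescribed discontinuous $f_0$, whereas the paper's soft argument only certifies that some $\U_2$ fails. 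One small point to tidy up: in the FIP check you treat only linearly independent $y_1,\dots,y_k$; for a general finite subfamily you should reduce to a basis of $\mathrm{span}\{y_1,\dots,y_k\}$ and use linearity of the $x_n^*$ and of $f_0$ to control $|x_n^*(y_i)-f_0(y_i)|$ by the values on that basis -- routine, but it should be said.
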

\begin{proof}

Let $X = (Y^*, \sigma(Y^*, Y) )$, where $Y$ is a separable infinite-dimensional Banach space. According to \cite[Corollary 5.1 and Theorem 5.3]{bondt}, the Martin's axiom guaranties the existence of
$2^{2^{\aleph_0}}$
-many B-UBP-ultrafilters.  Let $\U_1$ be a B-UBP-ultrafilter on $\N$. Due to Theorem \ref{thm-bcompl-Fcoml}, $X \in \Compl(\U_1)$. On the  other  hand, $X$ is separable (the dual to a separable Banach space contains a countable total system \cite[Section 17.2.4, Corollary 2]{kadets} and, consequently, is w$^*$-separable) and incomplete, so by Theorem \ref{thm-count-comp-sep} $X$  is not  countably complete, which implies (Remark \ref{rem-all-ultraf}) that  $X \notin \Compl(\U_2)$ for some free ultrafilter $\U_2$ on $\N$.
\end{proof}

Recall, that the dual to  $\ell_1$ is  $\ell_\infty$, and for every  $x = (x_1, x_2, \ldots) \in \ell_\infty$ and  $y = (y_1, y_2, \ldots) \in \ell_1$  the action of $x$ on $y$ is $x(y) = \sum_{n \in \N}x_n y_n$.

\begin{theorem}\label{thm-seq-comp-notultrcomp}
The space $\ell_1$ in the weak topology $\sigma(\ell_1, \ell_\infty)$ is sequentially complete, but is not boundedly complete  over any  free ultrafilter $\U$ on $\N$.
\end{theorem}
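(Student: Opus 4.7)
The plan splits naturally into the two clauses of the statement.

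For the sequential completeness of $(\ell_1, \sigma(\ell_1, \ell_\infty))$ I would argue that this is the classical weak sequential completeness of $\ell_1$. Given a weakly Cauchy sequence $(x_n) \subset \ell_1$, first apply the Banach--Steinhaus theorem to the scalar sequences $\varphi \mapsto \varphi(x_n)$ on $\ell_\infty$ to deduce $\sup_n \|x_n\|_1 =: C < \infty$. Testing against the coordinate functional $e_k^* \in \ell_\infty$ gives a scalar Cauchy sequence, so coordinate-wise limits $y_k = \lim_n x_n(k)$ exist, and the bound $\sum_{k=1}^m |y_k| \le C$ forces $y = (y_k) \in \ell_1$. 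It remains to show $x_n \to y$ in $\sigma(\ell_1, \ell_\infty)$. Either one invokes Schur's theorem together with a gliding-hump construction of a sign sequence $\varphi \in \ell_\infty$ separating a badly behaved subsequence, or one cites the classical weak sequential completeness of $\ell_1$ directly. This is the only mildly technical step.

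For the second clause, let $\U$ be any free ultrafilter on $\N$ and consider the canonical sequence $e = (e_n)_{n \in \N}$ of unit vectors in $\ell_1$. It is norm bounded (hence $\sigma(\ell_1, \ell_\infty)$-bounded). For every $\varphi = (\varphi_n) \in \ell_\infty$ the scalar sequence $\varphi(e_n) = \varphi_n$ is bounded, so $\lim_\U \varphi(e_n)$ exists in $\K$; consequently $e[\U]$ is a Cauchy filter in $(\ell_1, \sigma(\ell_1, \ell_\infty))$, i.e. $e \in \Cauchy(\U)$.

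Now suppose, aiming at a contradiction, that $y = \lim_\U e_n$ exists in $(\ell_1, \sigma(\ell_1, \ell_\infty))$. Testing against $e_k^* \in \ell_\infty$ for each fixed $k$, one has $y_k = \lim_\U \delta_{n,k}$; since $\U$ is free, $\{n \in \N : n \neq k\} \in \U$, so $y_k = 0$. Thus $y = 0$ is the only possible weak $\U$-limit. However, testing against $\mathbf{1} = (1,1,\ldots) \in \ell_\infty$ gives $\lim_\U \mathbf{1}(e_n) = \lim_\U 1 = 1$, whereas $\mathbf{1}(0) = 0$, a contradiction. Hence $(e_n)$ is a bounded $\U$-Cauchy sequence with no $\U$-limit, witnessing $\ell_1 \notin \Compl_b(\U)$. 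The main obstacle is really only the weak sequential completeness of $\ell_1$; the ultrafilter half reduces to a transparent coordinate-by-coordinate computation based on the freeness of $\U$ and the two test functionals $e_k^*$ and $\mathbf{1}$.
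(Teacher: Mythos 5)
Your proposal is correct and follows essentially the same route as the paper: the first clause is reduced to the classical weak sequential completeness of $\ell_1$ (which the paper simply cites), and the second clause uses exactly the same witness, namely the canonical basis $(e_n)$, shown to be $\U$-Cauchy because bounded scalar sequences have $\U$-limits, and shown to have no limit in $\ell_1$ by testing against the coordinate functionals and against $\mathbf{1}=(1,1,\ldots)$. No substantive differences to report.
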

\begin{proof}

Weak sequential completeness of $\ell_1$ (as well as of all spaces $L_1(\mu)$) is a classical Banach space theory result, see \cite[Theorem 2.5.10]{albiac}. Now, let us fix an arbitrary free ultrafilter $\U$ on $\N$ and demonstrate that $(\ell_1, \sigma(\ell_1, \ell_\infty)) \notin \Compl_b(\U)$. Denote $\left\{{e_n} \right\}_1^\infty$ the \emph{canonical basis} of $\ell_1$, that is $e_1 = (1,0,0,\ldots)$, $e_2 = (0,1,0,\ldots)$,\ldots . For every  $x = (x_1, x_2, \ldots) \in \ell_\infty$ the values $x(e_n) = x_n$ form a bounded sequence of scalars, hence there is the limit on $x(e_n)$ over $\U$. Consequently, $(x(e_n))$ is $\U$-Cauchy in the topology  $\sigma(\ell_1, \ell_\infty)$.

Now we show that the sequence $(e_k)$ does not have a weak limit over $\U$. Assume that there exists $z =(z_1, z_2, \ldots) \in \ell_1$ such that $x(z) = \lim_\U (x(e_n)) = \lim_\U (x_n)$ for all $x = (x_1, x_2, \ldots) \in \ell_{\infty}$. Then, on the one hand, considering $e_k$ as elements of $\ell_\infty$ we get that for every $k \in \N$
$$
z_k = e_k(z) = \lim_{\U, n} (e_k(e_n)) = 0,
$$
but on the other hand, taking $x = (1, 1, 1, \ldots)$ we get that
$\sum_{n \in \N} z_n =  \lim_\U (x_n) = 1$.
We came to a contradiction.
\end{proof}

The above result can be viewed in a bit different way. Consider $\ell_1$ as a subspace  of $\ell_\infty^*$. Then in $\sigma(\ell_\infty^*, \ell_\infty)$ we have that   $(e_k)$ is $\U$-convergent to the functional $x \mapsto \lim_\U x$, and this functional belongs to $\ell_\infty^* \setminus \ell_1$.

\section{Concluding remarks and open questions}

The following challenging problem remains open. 
\begin{problem} \label{prob1}
Is there a combinatorial description of those  filter $\F$ on $\N$ for which the sequential completeness of a TVS implies its  $\F$-completeness?
\end{problem}

\begin{problem} \label{prob1+}
Which of concrete filters  $\F$, widely mentioned in literature (like Erd\"os-Ulam filters, summable filters, $f$-statistical filters, filters generated by summability matrices, etc.)  enjoy the property that $\F$-sequential completeness of a TVS implies its  $\F$-completeness?
\end{problem}

Let us consider the following construction. For a free filter $\F$ on $\N$ denote $c(\F)$ the set of all bounded $\F$-convergent numerical sequences. Evidently $c_0 \subset c(\FR) \subset c(\F)  \subset \ell_\infty$. For an ultrafilter $\U$ we have  $c(\F) = \ell_\infty$. Following the argument from Theorem \ref{thm-seq-comp-notultrcomp} one can easily see that the space $(\ell_1, \sigma(\ell_1, c(\F)))$ is not $\F$-complete. So, every time when  $(\ell_1, \sigma(\ell_1, c(\F)))$ is sequentially complete, we obtain an example of a sequentially complete space which is not $\F$-complete. This relates Problem \ref{prob1} the following one.

\begin{problem}  \label{prob2} $ \ $

\begin{enumerate}[(i)]
\item Describe those linear subspaces $E$, $c_0 \subset E  \subset \ell_\infty$, for which the corresponding space  $(\ell_1, \sigma(E)$ is sequentially complete.
\item  Describe those free filters $\F$ on $\N$, for which the corresponding space  $(\ell_1, \sigma(\ell_1, c(\F)))$ is sequentially complete.
\end{enumerate}
\end{problem}

Remark, that for some filters $\F$ the corresponding space  $(\ell_1, \sigma(\ell_1, c(\F)))$ is not sequentially complete. This evidently happens for the the Fr\'{e}chet filter and for those filters $\F$ for which the implication ($(X \in \Compl(\FR)) \Rightarrow (X \in \Compl(\F))$) from Problem \ref{prob1} holds true. Let us give a more advanced example.

\begin{theorem}\label{thm-statist-to-prob2}
For every unbounded modulus function $f$ the space  \newline $(\ell_1, \sigma(\ell_1, c(\F_{f-st})))$  is not sequentially complete.
\end{theorem}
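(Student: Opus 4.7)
The plan is to exhibit a concrete norm-bounded sequence in $\ell_1$ that is $\sigma(\ell_1, c(\F_{f-st}))$-Cauchy but has no weak limit in $\ell_1$. The shape of the example is suggested by Theorem \ref{thm-loc-conv-bFst-compl}: for bounded scalar sequences, $\F_{f-st}$-convergence forces Cesàro convergence to the same value. This points to taking Cesàro averages of the canonical basis $(e_j)$ of $\ell_1$.

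Concretely, I would set $y^{(k)} := \frac{1}{k}\sum_{j=1}^{k} e_j \in \ell_1$, so that $\|y^{(k)}\|_1 = 1$ for every $k$. For the Cauchy property, fix $x = (x_n) \in c(\F_{f-st})$ with $\F_{f-st}$-limit $L$. Since $\F_{f-st} \subset \F_{st}$ (the inclusion cited in the proof of Corollary \ref{cor-loc-conv-bFst-compl}), $x$ is also $\F_{st}$-convergent to $L$. Because $x$ is bounded, the scalar version of Lemma \ref{lem-loc-conv-stat-Cesaro}, together with completeness of $\K$, yields $\frac{1}{k}\sum_{j=1}^{k} x_j \to L$. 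In particular, $\langle x, y^{(k)} \rangle = \frac{1}{k}\sum_{j=1}^{k} x_j$ is a convergent (hence Cauchy) scalar sequence, and since $x \in c(\F_{f-st})$ was arbitrary, $(y^{(k)})$ is $\sigma(\ell_1, c(\F_{f-st}))$-Cauchy.

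To show $(y^{(k)})$ has no limit in $\ell_1$, assume toward contradiction that $z = (z_n) \in \ell_1$ with $y^{(k)} \to z$ in $\sigma(\ell_1, c(\F_{f-st}))$. Testing against $e_m$ (viewed as an element of $c_0 \subset c(\F_{f-st})$) gives $\langle e_m, y^{(k)} \rangle = 1/k$ for $k \ge m$, hence $z_m = \langle e_m, z \rangle = 0$ for every $m$, forcing $z = 0$. However, testing against the constant sequence $\mathbf{1} = (1,1,\ldots) \in c(\F_{f-st})$ gives $\langle \mathbf{1}, y^{(k)} \rangle = 1$ for every $k$, so $\langle \mathbf{1}, z \rangle = 1 \neq 0$, a contradiction.

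The only mildly technical point is the Cesàro convergence of bounded $\F_{f-st}$-convergent scalar sequences; once one invokes $\F_{f-st} \subset \F_{st}$, this reduces to the classical statistical case, and the rest is a direct calculation paralleling the proof of Theorem \ref{thm-seq-comp-notultrcomp}.
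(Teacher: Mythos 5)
Your proof is correct and follows essentially the same route as the paper: the same sequence of Ces\`aro means $\frac{1}{k}\sum_{j=1}^k e_j$, the same key fact that bounded $\F_{f-st}$-convergent scalar sequences are Ces\`aro convergent (via $\F_{f-st}\subset\F_{st}$), and the same obstruction to a limit in $\ell_1$. You merely spell out the final contradiction (testing against $e_m$ and against $(1,1,\ldots)$) more explicitly than the paper, which states it in one line by observing that $y\mapsto\lim_{\F_{f-st}}(y_n)$ is not of the form $y\mapsto\sum_n z_n y_n$.
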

\begin{proof}
Denote $x_n = \frac{1}{n}\sum_{k=1}^n e_k$, where $e_k \in \ell_1$ are the elements of the canonical basis. For every $y = (y_1, y_2, \ldots) \in c(\F_{f-st})$ we know \cite[Corollary 2.2]{aizpuru} that it is statistically convergent to its $f$-statistical limit, so it is Cesaro convergent. Consequently
$$
y(x_n) = \frac{1}{n}\sum_{k=1}^n y_k \xrightarrow[n \to \infty]{} \lim_{\F_{f-st}} (y_n).
$$
This means that $(x_n)$ is a Cauchy sequence in $(\ell_1, \sigma(\ell_1, c(\F_{f-st})))$. On the other hand,  $(x_n)$ is not $\sigma(\ell_1, c(\F_{f-st}))$-convergent to any element of $z \in \ell_1$ because the mapping $y \mapsto  \lim_{\F_{f-st}} (y_n)$ cannot be represented in the form $y \mapsto  \sum_{n \in \N}z_n y_n$.
\end{proof}

\begin{problem}  \label{prob3}
Does there exist a ``universal'' ultrafilter $\U$ on $\N$, such that completeness with respect to $\U$ implies the countable completeness?  Is the existence of such $\U$ consistent with ZFC axioms?
\end{problem}

\begin{problem}   \label{prob4}
For a given TVS $X$ denote $\Compl(X)$ the set of those filters $\F$ for which $X \in \Compl(\F)$. Items (4--6) of Theorem \ref{thm-ComplF1F2} give some restrictions on the structure of $\Compl(X)$. What else can be said about this set? For example, are there any topological  restrictions on the intersection of $\Compl(X)$ with the space $\beta{\N}$ of all ultrafilters?
\end{problem}

Remark that the questions formulated in Problems \ref{prob1}, \ref{prob1+},  \ref{prob3}, and  \ref{prob4} can be asked for smaller classes of spaces, for example for locally convex spaces.


\begin{thebibliography}{99}

\bibitem{aizpuru} Aizpuru, A.; List\'{a}n-Garc\'{i}a, M. C.; Rambla-Barreno, F.
\emph{Density by moduli and statistical convergence}. Quaest. Math. \textbf{37}, No. 4, 525--530 (2014). 

\bibitem{albiac} Albiac, Fernando; Kalton, Nigel J.
Topics in Banach space theory. Graduate Texts in Mathematics 233. Berlin: Springer. xi, 373~p. (2006). 

\bibitem{Bourbaki}  \textsc{N.~Bourbaki}, \emph{Elements of mathematics. General topology. Chapters 1--4}. Reprint of the 1971 original. (\'El\'ements de math\'ematique. Topologie g\'en\'erale. Chapitres 1 -- 4.) (French) Berlin: Springer. xv, 350~p. (2007). 

\bibitem{bondt} De Bondt, B.; Vernaeve, H. \emph{Filter-dependent versions of the uniform boundedness principle}. J. Math. Anal. Appl. \textbf{495}, No. 1, Article ID 124705, 23 p. (2021).

\bibitem{conkad} Connor J., Ganichev M., Kadets V. {\it A characterization of
Banach spaces with separable duals via weak statistical convergence},
J. Math. Anal. Appl. {\bf 244}, no 1, 251 - 261 (2000).

\bibitem{fast}  Fast, H.: \emph{Sur la convergence statistique}. Colloq. Math. 2, 241 - 244 (1951) 

\bibitem{gakad} Ganichev M., Kadets V. {\it Filter Convergence in Banach
Spaces and generalized Bases} / in Taras Banakh (editor) General Topology
in Banach Spaces : NOVA Science Publishers, Huntington,
New York, 2001; pp. 61 - 69.


\bibitem{Kad-cyl} V.~Kadets. \textit{Weak cluster points of a sequence and coverings by cylinders}. Mat. Fiz. Anal. Geom., \textbf{11}, No. 2, 161 -- 168 (2004).

\bibitem{KLO2010} V~Kadets, A.~Leonov, C.~ Orhan. \textit{Weak statistical convergence and weak filter convergence for unbounded sequences}. J. Math. Anal. Appl. 371, (2010), No. 2, 414--424. 

\bibitem{kadets} Kadets, V. \textit{A course in Functional Analysis and Measure Theory}. Translated from the Russian by Andrei Iacob. Universitext. Cham: Springer. xxii, 539~p. (2018). 

\bibitem{koethe} K\"othe, Gottfried
\emph{Topological vector spaces. I}. (Translated by D. J. H. Garling). 
Die Grundlehren der mathematischen Wissenschaften in Einzeldarstellungen. 159. Berlin-Heidelberg-New York: Springer Verlag. xv, 456 p. (1969). 

\bibitem{listan-garcia}  List\'{a}n-Garc\'{i}a, M.C. \emph{$f$-statistical convergence, completeness and $f$-cluster points}, Bull. Belg. Math. Soc. Simon Stevin, \textbf{23}, 235--245 (2016).

\bibitem{R-R}
 {A.P.~Robertson and W.~Robertson}, \emph{Topological Vector Spaces}, Reprint of the second edition. Cambridge Tracts in Mathematics, 53. Cambridge University Press, Cambridge-New York, 1980.

\end{thebibliography}
\end{document}